\documentclass{amsart}

\usepackage{nccmath}
\usepackage{stmaryrd}
\usepackage{enumerate}
\usepackage{mathtools}

\usepackage[euler]{textgreek}
\usepackage{listings}
\usepackage{algorithm}
\usepackage{algpseudocode}

\newcommand{\vx}{\mathbf{x}}

\newcommand{\vn}{\mathbf{n}}

\newcommand{\vK}{\mathbf{K}}

\newcommand{\vI}{\mathbf{I}}

\newcommand{\cL}{\mathcal{L}}

\newcommand{\cT}{\mathcal{T}}

\newcommand{\cV}{\mathcal{V}}

\newcommand{\iO}{\int_{\Omega}}

\newcommand{\llb}{ \big\llbracket } 
\newcommand{\rrb}{ \big\rrbracket }

\newcommand{\bp}{\bar{p}}
\newcommand{\bz}{\bar{z}}

\newcommand{\bgm}{\bar{\gamma}}

\newcommand{\U}{\mathbb{U}}   
 
\newcommand{\R}{\mathbb{R}}   

\usepackage{amssymb, epsfig, enumerate, bm, xcolor}
\usepackage{amsmath,amsfonts}
\usepackage{mathrsfs}
\numberwithin{equation}{section}
\usepackage[top=1in, bottom=1in, left=1.3in, right=1.3in]{geometry}
\newtheorem{assumption}{Assumption}

\let\remark\remark         \let\endremark\endremark

\usepackage{epsfig, comment, graphicx, relsize}
\usepackage{subcaption}

\usepackage{tikz}
\usetikzlibrary{patterns}
\usepackage{hyperref}
\usepackage[many]{tcolorbox}
\usepackage{algorithmicx}
\usepackage{float}

\newcommand{\Eh}{{{\mathcal E}_h}} 
\newcommand{\Eho}{{{\mathcal E}^{I}_h}} 

\newcommand{\btau}{\boldsymbol{\tau}}

\theoremstyle{remark}
\newtheorem{remark}{Remark}[section]
\newtheorem{dfn}{Definition}[section]
\newtheorem{lem}{Lemma}[section]
\newtheorem{thm}{Theorem}[section]

\begin{document}

\title[DG Optimal Control]{A Monolithic Discontinuous Galerkin Framework for Darcy Optimal Control with Radon-Measure Tracking and Pointwise Control Constraints}      

\author{Seonghee Jeong}
\address{School of Computing and Data Science, Wentworth Institute of Technology,
Boston, MA 02115, U.S.A.}
\email{jeongs2@wit.edu}
\author{Sanghyun Lee}
\address{Department of Mathematics, Florida State University,
Tallahassee, FL 32306, U.S.A.}
\email{slee17@fsu.edu}


\keywords{ 
 {Darcy optimal control, discontinuous Galerkin methods, symmetric interior penalty method, Radon-measure tracking, pointwise control constraints, primal-dual active set method, local mass conservation}}

\begin{abstract}
{We study an elliptic optimal control problem governed by Darcy's equation in heterogeneous porous media, with pointwise box constraints on the control.
The objective functional is formulated via a Radon measure, which allows the desired pressure state to be tracked on observation sets of varying dimension, including points, curves, and subdomains, within a single formulation.
The state and adjoint equations are discretized by a symmetric interior penalty discontinuous Galerkin method, yielding a locally mass-conservative approximation that is robust across strong permeability discontinuities, while the control is approximated by piecewise constants.
The state, adjoint, and control are retained as primary unknowns in a single coupled optimality system and solved monolithically by a primal-dual active set strategy. 
We establish stability and well-posedness of the discretization and derive a priori $L^2$ error estimates for both variables. 
The principal difficulty is the reduced regularity of the adjoint state induced by the measure-valued tracking data
The analysis controls the resulting adjoint-control coupling through an intermediate adjoint driven by the continuous optimal state. 
Numerical experiments confirm the predicted convergence rates for point, curve, and subdomain observations, exhibit mesh-independent primal-dual active set iteration counts, and demonstrate local mass conservation.}
\end{abstract}

\maketitle

\section{Introduction}

In many subsurface flow applications, the computational task is not only to predict the pressure
field generated by prescribed sources, but also to determine admissible forcing mechanisms that
steer the system toward a desired pressure response. Such forcing mechanisms arise, for example,
through injection and extraction rates, distributed source or sink terms, or operational decisions
used in reservoir management, groundwater remediation, and geothermal energy
systems~\cite{culver1992dynamic,dixit2022stochastic,simon2015adjoint}. A central difficulty is that
the available pressure information is rarely distributed over the entire domain; instead,
measurements or design targets are typically concentrated at monitoring wells, along fracture-like
or geological features, or within selected regions of practical interest.
This motivates an optimal control formulation in which the state-tracking term acts only on
geometrically localized sets. Because the governing equation is a Darcy flow model in heterogeneous
porous media, the discretization should additionally preserve a local balance of mass.
These considerations lead naturally to a Radon-measure tracking functional combined with a locally
conservative discretization of the Darcy equation~\cite{brenner2024c0,jeong2025optimal}.

In this paper, we develop and analyze discontinuous Galerkin (DG) finite element methods for an
optimal control problem governed by Darcy's equation with pointwise box constraints on the control.
The tracking term is formulated by means of a Radon measure, allowing the desired pressure state to
be tracked on localized observation sets, including points, curves, and subdomains of the physical
domain. This formulation naturally covers monitoring-well observations, line-based measurements
(e.g\textcolor{red}{.}, fractures), and regional pressure targets within a unified framework.
The use of a symmetric interior penalty discontinuous Galerkin (SIPG) discretization further
provides local mass conservation, which is a key structural property for flow simulations in
heterogeneous porous media.
The control is approximated by piecewise constants, which is consistent with distributed source or
sink controls and leads to a natural finite-dimensional representation of the pointwise box
constraints.

Thus, for a bounded convex polygonal domain $\Omega\subset \R^2$ with boundary $\partial\Omega$, we
consider the following optimal control problem: find $(\bp,\bgm)$ such that
\begin{equation}\label{eqn:ocp}
(\bp,\bgm)
=
\operatorname*{arg\,min}_{(p,\gamma)\in \U}
\left\{
\frac{1}{2}\iO |p-p_d|^2 \, d\nu
+
\frac{\beta}{2}\iO |\gamma|^2 \, d\vx
\right\}
\end{equation}
subject to
\begin{subequations}\label{eqn:pde_constraints}
\begin{align}
-\nabla\cdot(\vK\nabla p)
&= f+\gamma
\quad \text{in } \Omega,\\
p
&= 0
\quad \text{on } \partial\Omega,
\end{align}
\end{subequations}
and the pointwise control constraints
\begin{equation}\label{eqn:control_constraints}
\gamma_-(\vx)
\leq
\gamma(\vx)
\leq
\gamma_+(\vx)
\quad \text{for a.e. } \vx\in\Omega.
\end{equation}
Here, $\U:=H^1_0(\Omega)\times L^2(\Omega)$ denotes the state-control space, and $\beta>0$.

The elliptic equation \eqref{eqn:pde_constraints}, which is also known as Darcy's flow equation,
models fluid flow in porous media. Here, $p:\Omega\rightarrow\mathbb{R}$ denotes the pressure,
$f\in L^2(\Omega)$ is a prescribed source or sink term, and $\vK$ is the isotropic permeability
tensor. For the theoretical analysis, we assume that $\vK\in (H^2(\Omega))^{2\times 2}$ is symmetric
and uniformly positive definite. That is, there exist positive constants $k_0$ and $k_1$ such that
\begin{equation}\label{eqn:uniform_ellipticity}
k_0\xi^T\xi
\leq
\xi^T\vK(\vx)\xi
\leq
k_1\xi^T\xi,
\qquad
\forall \xi\in\R^2,
\quad
\text{for a.e. } \vx\in\Omega.
\end{equation}
These assumptions are used to obtain the regularity estimates required in the a priori error
analysis. 
We emphasize that the a priori error estimates of
Section~\ref{sec:convergence_analysis} are established under this smoothness assumption on $\vK$, which
guarantees the elliptic regularity used throughout the analysis. The locally conservative,
coefficient-weighted SIPG discretization, however, is defined for merely piecewise-constant (and
hence discontinuous) permeability fields, and in Section~\ref{sec:numerics} we investigate
numerically its robustness for such heterogeneous media, for which the convergence theory is not
directly covered by these assumptions. 
In the numerical experiments, we also consider
heterogeneous permeability fields in order to illustrate the robustness of the locally conservative
DG discretization.

The pressure $p$ is the state variable, $\gamma$ is the control variable, and $p_d$ is the desired
state. Through the optimal control problem \eqref{eqn:ocp}, we seek a control $\gamma$ such that the
associated pressure $p$ approximates $p_d$ on the observation sets encoded by the measure $d\nu$,
while satisfying Darcy's equation \eqref{eqn:pde_constraints} and the pointwise control constraints
\eqref{eqn:control_constraints}. Moreover, we assume that $\gamma_\pm\in H^1(\Omega)$ and that
$\gamma_-(\vx)<\gamma_+(\vx) \text{ for a.e. } \vx\in\Omega.$

The main contributions of this paper are as follows. First, we consider a general Radon-measure
tracking functional, which unifies pointwise, curve-based, and subdomain-based observations of the
pressure state within a single formulation. Second, we employ a SIPG method with coefficient-weighted
numerical fluxes for the state and adjoint equations, thereby obtaining a locally mass-conservative
discretization that remains robust across strong permeability discontinuities and is suitable for
Darcy flow in heterogeneous porous media. Third, we retain the state, adjoint, and control variables
in a coupled optimality system and solve the constrained problem monolithically by a primal-dual
active set strategy, rather than eliminating the control or reconstructing it through a separate
post-processing step, so that the exact coupling between the discrete flow equation, adjoint
equation, and control constraints is preserved. The resulting analysis establishes stability and
well-posedness of the discrete optimality system and derives a priori $L^2$ error estimates for both
the state and control variables, with convergence rates that reflect the reduced regularity of the
adjoint state caused by the measure-data tracking term. While each of these ingredients has been
studied separately, to the best of our knowledge their combination---a locally conservative
discontinuous Galerkin discretization of a control-constrained Darcy optimal control problem with a
general Radon-measure tracking functional accommodating point, curve, and subdomain observations,
solved monolithically---has not been considered previously.

Optimal control problems governed by elliptic partial differential equations (PDEs) have been
extensively studied over the past decades; see, for example,
\cite{brenner2017new,casas2014new,falk1973approximation,hintermuller2007mesh,hinze2005variational,troltzsch2010finite,wan1992elliptic}.
DG methods and related finite element discretizations have also been developed for several classes of
PDE-constrained optimal control problems
\cite{allendes2022error,anh2020discontinuous,brenner2023symmetric,liu2024discontinuous,yucel2015discontinuous}.
In particular, \cite{brenner2023symmetric,liu2024discontinuous} study DG methods for elliptic optimal
control problems with pointwise state constraints. In these works, the analysis is primarily
formulated in terms of the state variable, and the control is recovered through the optimality system
or the PDE constraint. For control-constrained problems, DG and related nonconforming discretizations
have also been analyzed within general error-analysis frameworks \cite{chowdhury2015framework},
through DG discretizations for distributed control problems \cite{bommanaboyana2025dwdg}, and for
Dirichlet boundary control \cite{sau2025dirichlet}. These works are formulated with standard
Lebesgue-measure tracking terms, for which the adjoint equation does not exhibit the same
measure-induced loss of regularity considered here.

A separate line of work concerns tracking functionals with measure or point data, where the adjoint
equation carries a measure-valued right-hand side and the adjoint state has reduced regularity. Error
estimates for finite element approximations of such problems were initiated by
Casas~\cite{casas19852} and have been developed for pointwise-tracking control problems, for instance
in~\cite{allendes2022error}, where the control is discretized by piecewise constants or treated
variationally. Most closely related to the present setting, \cite{brenner2024c0} analyzes an interior
penalty method for an elliptic distributed control problem with general tracking and pointwise
\emph{state} constraints, and \cite{brenner2026point} studies point tracking with pointwise
\emph{control} constraints. These works are posed on the Poisson equation with conforming or $C^0$
interior penalty discretizations; they do not address the Darcy setting, local mass conservation, or
the coefficient-weighted DG discretization considered here, and the tracking data is restricted to
point observations rather than the unified point/curve/subdomain measure used in this paper.

The present paper differs from the works above in that the constraints are imposed directly on the
control variable, the tracking data is a general Radon measure, and the discretization is locally
conservative. We retain the state, adjoint, and control variables in the formulation and approximate
them simultaneously. The resulting discrete first-order optimality system is solved by a primal-dual
active set strategy, leading to a monolithic treatment of the constrained optimal control problem.
This approach is natural for control-constrained problems and allows us to
derive $L^2$-error estimates for both the state and control variables. Our analysis builds on
convergence techniques for elliptic optimal control problems with pointwise control constraints
developed in~\cite{brenner2024new,brenner2026point}, while incorporating the additional difficulties
caused by the coefficient-weighted DG discretization and the Radon-measure tracking term; in
particular, the coupling between the discrete adjoint and control errors is controlled through an
intermediate adjoint driven by the continuous optimal state. We note that our earlier
work~\cite{jeong2025optimal} considers Darcy optimal control with general tracking using a different
discretization framework based on a $C^0$ interior penalty method with the control eliminated from
the system, whereas the present paper develops a locally conservative symmetric interior penalty DG
discretization with pointwise control constraints, retains the control as a primary unknown, and
analyzes the associated measure-induced reduction of adjoint regularity.

The general tracking formulation broadens the applicability of the method to practical situations
where pressure observations or design targets are available only at monitoring wells (points), along
fractures (curves), or in selected subregions of the domain, and the combination of measure-valued
tracking, pointwise control constraints, and a locally conservative DG discretization is what
necessitates the careful error analysis developed in Section~\ref{sec:convergence_analysis}.

The remainder of this paper is organized as follows. Section~\ref{sec:wellposedness}
introduces the Radon-measure tracking functional, derives the first-order optimality conditions via
the transposition method, collects the relevant regularity results for the optimal variables, and
develops the SIPG discretization together with the discrete optimality system and local mass
conservation property. Section~\ref{sec:convergence_analysis} establishes $L^2$ error estimates for the state
and control variables, with convergence rates that reflect the reduced regularity of the adjoint
state caused by the measure-data tracking term. Section~\ref{sec:numerics} presents numerical
experiments on heterogeneous porous media confirming the predicted convergence rates for point,
curve, and subdomain observations and demonstrating the local mass conservation of the proposed
method. Section~\ref{sec:conclusions} collects concluding remarks.

\section{Well-posedness and Regularity Results}
\label{sec:wellposedness}

In this section, we show the well-posedness of the optimal control problem and recall the regularity results needed for the error analysis.
The elliptic distributed optimal control problem with pointwise control constraints has been widely studied \cite{falk1973approximation,troltzsch2010finite}.
In the present work, the cost functional contains a general tracking term expressed through a Radon measure, together with pointwise constraints imposed directly on the control variable.
Optimal control problems with general tracking and pointwise state constraints can be found, for example, in \cite{brenner2024c0}.
The main additional feature here is that the measure-data tracking term is combined with pointwise control constraints and a locally conservative discontinuous Galerkin discretization.

\subsection{Radon Measure}\label{sec:Radon}

The purpose of introducing the Radon measure in \eqref{eqn:ocp} is to allow the desired pressure state to be tracked on localized observation sets, including points, curves, and subdomains of $\Omega$.
Following \cite{brenner2024c0,jeong2025optimal}, we define the Radon measure $\nu$ by
\begin{equation}\label{radon}
    \int_\Omega q\,d\nu
    =
    \sum_{j=1}^J q(\mathscr{P}_j)w^j_\mathscr{P}
    +
    \sum_{l=1}^L\int_{\mathscr{C}_l}q w^l_\mathscr{C}\,ds
    +
    \sum_{m=1}^M \int_{\mathscr{E}_m} q w^m_\mathscr{E}\,d\vx,
\end{equation}
for any continuous function $q$ on $\bar{\Omega}$.
Here,
$\mathscr{P}=\{\mathscr{P}_1,\ldots,\mathscr{P}_J\}$ is a finite set of points with $\mathscr{P}_j\in\Omega$,
$\mathscr{C}=\{\mathscr{C}_1,\ldots,\mathscr{C}_L\}$ is a finite collection of sufficiently smooth curves with $\mathscr{C}_l\Subset\Omega$,
and
$\mathscr{E}=\{\mathscr{E}_1,\ldots,\mathscr{E}_M\}$ is a finite collection of measurable subdomains with $\mathscr{E}_m\Subset\Omega$.
The point weights $w^j_\mathscr{P}$ are nonnegative constants, while
the weight functions $w_\mathscr{C}^l$ and $w^m_\mathscr{E}$ are bounded nonnegative Borel measurable functions defined on the corresponding observation sets.
Thus $\nu$ is a finite nonnegative Radon measure on $\bar{\Omega}$.
The observation sets are allowed to overlap; in that case, the measure in \eqref{radon} is understood as the additive superposition of the point, curve, and subdomain contributions.

The desired state $p_d$ is prescribed on the support of $\nu$ by
\begin{equation}\label{eqn:desired_state_radon}
p_d :=
\begin{cases}
p_\mathscr{P} & \text{on } \mathscr{P},\\
p_\mathscr{C} & \text{on } \mathscr{C}\setminus\mathscr{P},\\
p_\mathscr{E} & \text{on } \mathscr{E}\setminus(\mathscr{C}\cup\mathscr{P}),
\end{cases}
\end{equation}
such that
\begin{equation}\label{eqn:pd_l2_nu}
\|p_d\|^2_{L^2(\Omega;\nu)}
:=
\int_\Omega |p_d|^2\,d\nu
<\infty.
\end{equation}
Here and below, $\mathscr{C}$ and $\mathscr{E}$ also denote the unions of the corresponding curve and subdomain observation sets.
The set-difference structure in \eqref{eqn:desired_state_radon} assigns a single value to $p_d$ on intersections of observation sets, using the priority order point, curve, and then subdomain. Since $\nu$ is supported on $\mathscr{P}\cup\mathscr{C}\cup\mathscr{E}$, the value of $p_d$ outside this set does not affect the tracking functional.
Equivalently, one may simply assume that $p_d\in L^2(\Omega;\nu)$. The explicit representation \eqref{eqn:desired_state_radon} is used only to emphasize the intended point, curve, and subdomain tracking cases.

\begin{remark}
For the state equation, we assume the following elliptic regularity estimate: if $g\in L^2(\Omega)$ and $p\in H^1_0(\Omega)$ satisfies
\begin{equation}\label{eqn:state_regulariy_assumption}
    -\nabla\cdot(\vK\nabla p)=g
    \quad \text{in } \Omega,
    \qquad
    p=0
    \quad \text{on } \partial\Omega,
\end{equation}
then
\begin{equation}\label{eqn:H2_regulariy_state}
    \|p\|_{H^2(\Omega)}
    \leq C\|g\|_{L^2(\Omega)}.
\end{equation}
This estimate holds, for example, under the convexity and coefficient-regularity assumptions stated in Section~1; see \cite{dauge1988elliptic,maz_i_a2010elliptic}.
Since $f+\gamma\in L^2(\Omega)$ for every admissible control $\gamma$, the associated state satisfies $p\in H^2(\Omega)$. 
Since $\Omega\subset\mathbb{R}^2$, the Sobolev embedding
\[
    H^2(\Omega)\hookrightarrow C(\bar{\Omega})
\]
implies that point evaluations of $p$ are well-defined.
Therefore, the tracking term in \eqref{eqn:ocp} is well-defined; see also \cite{adams2003sobolev,brenner2024c0}. We emphasize that this $H^2$ regularity pertains to the state $p$, whose source lies in $L^2(\Omega)$. The adjoint state, by contrast, is governed by a measure-valued source and therefore exhibits reduced regularity, as discussed in Section~\ref{sec:Regularities}.
\end{remark}

\subsection{First-Order Optimality Conditions}
\label{sec:FOC}

Here, we derive the first-order optimality conditions for the optimal control problem.
Because the tracking term is expressed through a Radon measure, the adjoint equation contains measure data. Therefore, instead of relying on a formal Lagrangian involving an $H^1_0(\Omega)$ adjoint, we derive the optimality system through the reduced control formulation and then introduce the adjoint state by transposition.

Let
\begin{equation}\label{eqn:Ugamma}
U_\gamma
:=
\left\{
\eta\in L^2(\Omega):
\gamma_-(\vx)\leq \eta(\vx)\leq \gamma_+(\vx)
\quad \text{for a.e. } \vx\in\Omega
\right\}.
\end{equation}
For each $\gamma\in U_\gamma$, let $S\gamma=p$ denote the unique solution of
\begin{equation}\label{eqn:state_control_to_state}
a^{\vK}(p,v)
=
\iO (f+\gamma)v\,d\vx
\qquad
\forall v\in H^1_0(\Omega),
\end{equation}
where
\begin{equation}\label{eqn:aK_def}
a^{\vK}(v,w)
:=
\iO \vK\nabla v\cdot\nabla w\,d\vx.
\end{equation}
Since $\vK$ is symmetric, the bilinear form $a^{\vK}(\cdot,\cdot)$ is symmetric whenever both sides are well-defined. In particular, this holds for $(v,w)\in W^{1,s}_0(\Omega)\times W^{1,r}_0(\Omega)$, where $W^{1,r}_0(\Omega):=\{v\in W^{1,r}(\Omega):v=0\text{ on }\partial\Omega\}$ with $s\in(1,2)$, $r=s/(s-1)$, and $1/s+1/r=1$.
The optimal control problem can then be written in reduced form as
\begin{equation}\label{eqn:reduced_problem}
\min_{\gamma\in U_\gamma}
j(\gamma)
:=
\frac{1}{2}\iO |S\gamma-p_d|^2\,d\nu
+
\frac{\beta}{2}\iO |\gamma|^2\,d\vx .
\end{equation}

Since $U_\gamma$ is a nonempty, closed, and convex subset of $L^2(\Omega)$, and since $\beta>0$, the reduced functional $j$ is strictly convex and coercive on $U_\gamma$. Hence the problem admits a unique optimal control $\bgm\in U_\gamma$, and the corresponding optimal state $\bp=S\bgm$ is unique. Moreover, the first-order optimality condition is necessary and sufficient:
\begin{equation}\label{eqn:reduced_first_order}
j'(\bgm)(\eta-\bgm)\geq 0
\qquad
\forall \eta\in U_\gamma .
\end{equation}

For a direction $\delta\gamma\in L^2(\Omega)$, let $\delta p=G\delta\gamma$ denote the solution of the linearized state equation
\begin{equation}\label{eqn:linearized_state}
a^{\vK}(\delta p,v)
=
\iO \delta\gamma\,v\,d\vx
\qquad
\forall v\in H^1_0(\Omega).
\end{equation}
Here $G:L^2(\Omega)\to H^1_0(\Omega)$ denotes the linear solution operator associated with the homogeneous control-to-state equation, that is, the linear part of the affine solution map $S$, so that $S\gamma=G\gamma+S(0)$. Under the assumptions of Section~\ref{sec:Radon}, elliptic regularity gives
\begin{equation}\label{eqn:linearized_state_regularity}
\delta p=G\delta\gamma \in H^2(\Omega)
\end{equation}
Hence $\delta p\in C(\bar\Omega)$ and point evaluations of $\delta p$ are well-defined. In particular, the tracking term in $j$ is G\^ateaux differentiable, and
\begin{equation}\label{eqn:reduced_derivative}
j'(\bgm)\delta\gamma
=
\iO (\bp-p_d)\,\delta p\,d\nu
+
\beta\iO \bgm\,\delta\gamma\,d\vx .
\end{equation}

We now introduce the adjoint state. Let
\begin{equation}\label{eqn:measure_mu_def}
\mu:=(\bp-p_d)\nu .
\end{equation}
This is a finite signed Radon measure because $\bp-p_d\in L^2(\Omega;\nu)$ and $\nu$ is finite. The adjoint state $\bz$ is first defined by transposition as the function satisfying
\begin{equation}
\label{eqn:adjoint_transposition_definition}
\iO \bz\,g\,d\vx
=
\langle \mu,Gg\rangle
=
\iO (\bp-p_d)\,Gg\,d\nu
\qquad
\forall g\in L^2(\Omega).
\end{equation}
The right-hand side is bounded on $L^2(\Omega)$ because $Gg\in H^2(\Omega)\hookrightarrow C(\bar\Omega)$ and
\begin{equation}\label{eqn:transposition_boundedness}
\|Gg\|_{C(\bar\Omega)}
\leq C\|Gg\|_{H^2(\Omega)}
\leq C\|g\|_{L^2(\Omega)} ,
\end{equation}
where $C$ depends only on $\Omega$ and $\vK$.
Therefore, by the Riesz representation theorem, $\bz\in L^2(\Omega)$ is well-defined.

By the measure-data elliptic regularity theory, see for example \cite{casas19852}, the adjoint satisfies the sharper regularity
\begin{equation}\label{eqn:adjoint_regular_first}
\bz\in W^{1,s}_0(\Omega)
\qquad
\forall s\in[1,2).
\end{equation}
The restriction $s<2$ reflects the possible presence of point masses in the tracking measure. Point observations are the most singular among the point, curve, and subdomain contributions in \eqref{radon}; if no point masses are present, higher adjoint regularity may be available, depending on the dimension and regularity of the observation support. After this regularity is established, the adjoint also satisfies the following weak formulation with nonsmooth right-hand side:
\begin{equation}\label{eqn:adjoint_transposition}
a^{\vK}(\bz,v)
=
\langle\mu,v\rangle
=
\iO (\bp-p_d)v\,d\nu
\qquad
\forall v\in W^{1,r}_0(\Omega), \quad r>2.
\end{equation}
This formulation is meaningful because $W^{1,r}_0(\Omega)\hookrightarrow C(\bar{\Omega})$ for $r>2$ in two space dimensions. 

Taking $g=\delta\gamma$ in \eqref{eqn:adjoint_transposition_definition}, with $\delta p=G\delta\gamma$, gives the transposition identity
\begin{equation}\label{eqn:transposition_identity}
\iO (\bp-p_d)\,\delta p\,d\nu
=
\iO \bz\,\delta\gamma\,d\vx
\qquad
\forall \delta\gamma\in L^2(\Omega).
\end{equation}

Combining \eqref{eqn:reduced_derivative} and \eqref{eqn:transposition_identity}, we obtain
\begin{equation}\label{eqn:reduced_derivative_adjoint}
j'(\bgm)(\eta-\bgm)
=
\iO (\beta\bgm+\bz)(\eta-\bgm)\,d\vx .
\end{equation}
Since we may choose $s\in(1,2)$ so that $\bz\in W^{1,s}_0(\Omega)\hookrightarrow L^2(\Omega)$ in two dimensions, the preceding expression is well-defined. Hence the control optimality condition becomes
\begin{equation}\label{eqn:control_vi}
\iO (\beta\bgm+\bz)(\eta-\bgm)\,d\vx \geq 0
\qquad
\forall \eta\in U_\gamma .
\end{equation}

Collecting the state equation, the adjoint equation, and the control variational inequality, the continuous first-order optimality system reads: find $(\bp,\bgm,\bz)$ such that
\begin{subequations}\label{optimality_conditions}
\begin{align}
a^{\vK}(\bp,v)
&=
\iO (f+\bgm)v\,d\vx
&&
\forall v\in H^1_0(\Omega), \label{eqn:darcy}\\
a^{\vK}(\bz,v)
&=
\iO (\bp-p_d)v\,d\nu
&&
\forall v\in W^{1,r}_0(\Omega), \quad r>2, \label{eqn:adjoint_state}\\
(\beta\bgm+\bz,\eta-\bgm)_{L^2(\Omega)}
&\geq 0
&&
\forall \eta\in U_\gamma . \label{stationary_optimality_control_ineq}
\end{align}
\end{subequations}
The state equation is posed in the standard weak sense, while the adjoint equation is posed in the transposition sense because of the measure-valued right-hand side.

Equivalently, the optimal pair $(\bp,\bgm)$ is characterized by the reduced variational inequality
\begin{equation}\label{eqn:consolidated_vi}
\iO(\bp-p_d)(p-\bp)\,d\nu
+
\beta\iO\bgm(\gamma-\bgm)\,d\vx
\geq 0
\qquad
\forall \gamma\in U_\gamma,
\end{equation}
where $p$ and $\bp$ are the states associated with the controls $\gamma$ and $\bgm$, respectively, through \eqref{eqn:state_control_to_state}.

For later reference, we fix an arbitrary $s\in(1,2)$ and introduce the product constraint set
\begin{equation}\label{eqn:continuous_admissible_product_set}
\mathcal{U}_{ad}
:=
\left\{
(p,\gamma,z)\in H^1_0(\Omega)\times L^2(\Omega)\times W^{1,s}_0(\Omega):
\gamma_-(\vx)\leq \gamma(\vx)\leq \gamma_+(\vx)
\ \text{for a.e. } \vx\in\Omega
\right\}.
\end{equation}
Here the homogeneous Dirichlet conditions on the state and adjoint are encoded in $H^1_0(\Omega)$ and $W^{1,s}_0(\Omega)$, respectively. The actual optimality system additionally imposes the state equation, the measure-data adjoint equation, and the control variational inequality.

\subsection{Regularities}\label{sec:Regularities}
In this subsection, we collect the regularity properties of the optimal variables that will be used in the error analysis. The existence and uniqueness of the optimal control have already been obtained from the reduced formulation in Section~\ref{sec:FOC}. We therefore focus on the consequences of the measure-data adjoint equation and the pointwise projection formula for the control.

Let $(\bp,\bgm,\bz)$ satisfy the first-order optimality system \eqref{optimality_conditions}. Then $\bgm\in U_\gamma$, $\bp$ is the associated state, and $\bz$ is the adjoint state defined by transposition. Conversely, if $\bgm\in U_\gamma$, together with its associated state $\bp$ and adjoint state $\bz$, satisfies the variational inequality \eqref{stationary_optimality_control_ineq}, then $\bgm$ is the unique optimal control \cite{troltzsch2010optimal}.

\begin{remark}
A necessary and sufficient condition for the variational inequality \eqref{stationary_optimality_control_ineq} is that, for almost every $\vx\in\Omega$ \cite{troltzsch2010optimal},
\begin{equation}\label{control_vi}
\bgm(\vx) =
\begin{cases}
\gamma_-(\vx) & \text{if } \bz(\vx)+\beta\bgm(\vx) > 0,\\
\in[\gamma_-(\vx),\gamma_+(\vx)] & \text{if } \bz(\vx)+\beta\bgm(\vx) = 0,\\
\gamma_+(\vx) & \text{if } \bz(\vx)+\beta\bgm(\vx) < 0.
\end{cases}
\end{equation}
\end{remark}

\begin{dfn}
For real numbers $a\leq b$, the projection operator $\mathbb{P}_{[a,b]}$ is defined by
\begin{equation}\label{eqn:projection_operator}
\mathbb{P}_{[a,b]}(v):=\min\{b,\max\{a,v\}\}.
\end{equation}
\end{dfn}

The optimal control $\bgm$ satisfies, together with the adjoint state $\bz$, the projection formula
\begin{equation}\label{adjoint_state}
\bgm(\vx)
=
\mathbb{P}_{[\gamma_-(\vx),\gamma_+(\vx)]}
\left(-\frac{1}{\beta}\bz(\vx)\right)
\qquad
\text{for a.e. } \vx\in\Omega.
\end{equation}
Since $\gamma_\pm\in H^1(\Omega)\subset W^{1,s}(\Omega)$ for all $s\in[1,2]$ and $\bz\in W^{1,s}_0(\Omega)$ for all $s\in[1,2)$, the Lipschitz continuity of the pointwise projection implies
\begin{equation}\label{control_regul}
\bgm\in W^{1,s}(\Omega)
\qquad
\forall\,s\in[1,2).
\end{equation}

For the state equation, since $f\in L^2(\Omega)$ and $\bgm\in L^2(\Omega)$, the source term $f+\bgm$ belongs to $L^2(\Omega)$. Therefore, under the elliptic regularity assumption stated in Section~\ref{sec:Radon},
\begin{equation}\label{state_H2_regul}
\bp\in H^2(\Omega),
\qquad
\|\bp\|_{H^2(\Omega)}
\leq
C\left(\|f\|_{L^2(\Omega)}+\|\bgm\|_{L^2(\Omega)}\right).
\end{equation}

If, in addition, $f\in W^{1,s}(\Omega)$ for some fixed $s\in[1,2)$, then $f+\bgm\in W^{1,s}(\Omega)$ and interior elliptic regularity yields
\begin{equation}\label{state_regul_loc}
\bp\in W^{3,s}_{\mathrm{loc}}(\Omega).
\end{equation}

\subsection{The Lagrange Multiplier}
\label{sec:Lagrange_multiplier}

We now record two properties relating the optimal control and the adjoint state that will be used in the error analysis.
Following \cite{brenner2024new}, we define
\begin{equation}\label{lambda_L}
    \lambda := \beta\bgm + \bz .
\end{equation}
The function $\lambda$ can be interpreted as the Lagrange multiplier associated with the box constraints in \eqref{eqn:control_constraints}. By \eqref{eqn:adjoint_regular_first} and  \eqref{control_regul},
\begin{equation}\label{eqn:lambda_regular}
    \lambda\in W^{1,s}(\Omega)
    \qquad
    \forall s\in[1,2).
\end{equation}

Let
\begin{equation}\label{L_decomposition}
    \lambda = \lambda^- + \lambda^+,
\end{equation}
where
\begin{equation}\label{L_minmax}
    \lambda^- := \min\{\lambda,0\}\leq 0
    \qquad\text{and}\qquad
    \lambda^+ := \max\{\lambda,0\}\geq 0 .
\end{equation}
Since truncation at zero is a Lipschitz operation,
\begin{equation}\label{eqn:lambda_pm_regular}
    \lambda^+,\lambda^-\in W^{1,s}(\Omega)
    \qquad
    \forall s\in[1,2).
\end{equation}

By the projection formula \eqref{adjoint_state}, the optimal control satisfies
\begin{equation}\label{control_vi_2}
\bgm(\vx)
=
\begin{cases}
\gamma_-(\vx) 
& \text{if } -\bz(\vx)/\beta\leq \gamma_-(\vx), \\[2mm]
-\bz(\vx)/\beta 
& \text{if } \gamma_-(\vx)<-\bz(\vx)/\beta<\gamma_+(\vx), \\[2mm]
\gamma_+(\vx) 
& \text{if } -\bz(\vx)/\beta\geq \gamma_+(\vx),
\end{cases}
\qquad
\text{for a.e. } \vx\in\Omega .
\end{equation}
Equivalently, in terms of the multiplier $\lambda=\beta\bgm+\bz$, the following pointwise complementarity relations hold:
\begin{equation}\label{eqn:pointwise_complementarity}
    \lambda^+(\bgm-\gamma_-)=0,
    \qquad
    \lambda^-(\bgm-\gamma_+)=0
    \quad
    \text{a.e. in } \Omega .
\end{equation}
Consequently,
\begin{equation}\label{L_complementary}
    \iO \lambda^+(\bgm-\gamma_-)\,d\vx
    =
    0
    =
    \iO \lambda^-(\bgm-\gamma_+)\,d\vx .
\end{equation}




\subsection{Discontinuous Galerkin Finite Element Methods}

Let $\cT_h=\{T\}$ be a shape-regular partition of the domain $\Omega$ into quadrilateral elements with mesh size $h=\max_{T\in\cT_h}h_T$, where $h_T$ is the length of the diagonal of $T$.

We denote by $\Eh$ the set of all edges in the mesh, by $\Eho$ the set of all interior edges, and by $\cV_h$ the set of vertices of $\cT_h$.
For each $T\in\cT_h$, we denote the boundary of $T$ by $\partial T$.
For each interior edge $e\in\Eho$ shared by two neighboring elements $T_+$ and $T_-$, we fix the unit normal vector $\vn_e$ pointing from $T_+$ to $T_-$.
For a boundary edge $e\in\Eh\setminus\Eho$, $\vn_e$ denotes the unit outward normal of $\Omega$.

In this section, we assume that the permeability is represented by a piecewise constant scalar coefficient $\kappa$, with
$\kappa|_T=\kappa_T>0, 
\forall T\in\cT_h.$
Equivalently, the permeability tensor is $\vK=\kappa\vI$.
For an interior edge $e=\partial T_+\cap\partial T_-$, we denote
$\kappa^+ := \kappa|_{T_+},
\kappa^- := \kappa|_{T_-}.$
For a boundary edge $e\subset\partial T\cap\partial\Omega$, we set $\kappa_e:=\kappa|_T$.

We define the broken Sobolev space, for $s>3/2$, by
\begin{equation}\label{eqn:broken_sobolev_space}
    H^s(\Omega;\cT_h)
    :=
    \{v\in L^2(\Omega): v|_T\in H^s(T)\ \ \forall\,T\in\cT_h\}.
\end{equation}
For $v\in H^1(\Omega;\cT_h)$ and an interior edge $e\in\Eho$ shared by $T_\pm$, we define
\begin{equation}\label{eqn:trace_definition}
    v^\pm := (v|_{T_\pm})|_e .
\end{equation}
The scalar jump is defined by
\begin{equation}\label{eqn:jump_definition}
    \llb v \rrb := v^+ - v^- ,
\end{equation}
where the orientation is determined by the fixed normal vector $\vn_e$ from $T_+$ to $T_-$.
For a boundary edge $e\in\Eh\setminus\Eho$, which belongs to a single element $T$, we set
\begin{equation}\label{eqn:boundary_jump_definition}
    \llb v \rrb := v|_T .
\end{equation}

For interior edges, we use coefficient-dependent weighted averages. Define
\begin{equation}\label{eqn:beta_e_definition}
    \beta_e
    :=
    \frac{\kappa^-}{\kappa^+ + \kappa^-},
    \qquad
    1-\beta_e
    =
    \frac{\kappa^+}{\kappa^+ + \kappa^-}.
\end{equation}
For a scalar quantity $\zeta$ and a vector quantity $\btau$, define the weighted averages on $e\in\Eho$ by
\begin{equation}\label{eqn:weighted_average_definition}
    \{\!\{\zeta\}\!\}_{\beta_e}
    :=
    \beta_e\zeta^+ + (1-\beta_e)\zeta^-,
    \qquad
    \{\!\{\btau\}\!\}_{\beta_e}
    :=
    \beta_e\btau^+ + (1-\beta_e)\btau^- .
\end{equation}
The usual arithmetic average is denoted by
\begin{equation}\label{eqn:arithmetic_average_definition}
    \{\!\{\zeta\}\!\}
    :=
    \frac12(\zeta^+ + \zeta^-),
    \qquad
    \{\!\{\btau\}\!\}
    :=
    \frac12(\btau^+ + \btau^-) .
\end{equation}
On a boundary edge $e\in\Eh\setminus\Eho$, we set
\begin{equation}\label{eqn:boundary_average_definition}
    \{\!\{\zeta\}\!\}_{\beta_e}
    =
    \{\!\{\zeta\}\!\}
    :=
    \zeta,
    \qquad
    \{\!\{\btau\}\!\}_{\beta_e}
    =
    \{\!\{\btau\}\!\}
    :=
    \btau .
\end{equation}

The harmonic edge coefficient is defined by
\begin{equation}\label{eqn:kappa_e_definition}
\kappa_e
:=
\begin{cases}
\dfrac{2\kappa^+\kappa^-}{\kappa^+ + \kappa^-},
& e=\partial T_+\cap\partial T_- \in \Eho,\\[3mm]
\kappa_h|_T,
& e\subset \partial T\cap\partial\Omega .
\end{cases}
\end{equation}
With the choice \eqref{eqn:beta_e_definition}, the weighted average of the diffusive flux satisfies, on each interior edge,
\begin{equation}\label{eqn:weighted_flux_identity}
\{\!\{\kappa_h\nabla v\}\!\}_{\beta_e}
=
\kappa_e \{\!\{\nabla v\}\!\}.
\end{equation}
Indeed,
\[
\{\!\{\kappa_h\nabla v\}\!\}_{\beta_e}
=
\frac{\kappa^-}{\kappa^+ + \kappa^-}\kappa^+(\nabla v)^+
+
\frac{\kappa^+}{\kappa^+ + \kappa^-}\kappa^-(\nabla v)^-
=
\frac{\kappa^+\kappa^-}{\kappa^+ + \kappa^-}
\bigl((\nabla v)^+ + (\nabla v)^-\bigr)
=
\kappa_e\{\!\{\nabla v\}\!\}.
\]

We define the discontinuous finite element spaces $V_h$ and $W_h$ by
\begin{align}
    V_h
    &:=
    \{v\in L^2(\Omega): v|_T\in \mathbb{Q}_1(T)\ \ \forall\, T\in\cT_h\},
    \label{eqn:Vh}\\
    W_h
    &:=
    \{v\in L^2(\Omega): v|_T\in \mathbb{Q}_0(T)\ \ \forall\, T\in\cT_h\}.
    \label{eqn:Wh}
\end{align}
Here $\mathbb{Q}_k(T)$ denotes the space of polynomials of degree at most $k$ on $T$.
The homogeneous Dirichlet boundary condition is imposed weakly through the boundary terms of the symmetric interior penalty bilinear form below.

Using the SIPG discretization with coefficient-weighted flux averages, we define
\begin{align}\label{DG_bilinear}
    a_h^{\kappa}(v,w)
    &:=
    \sum_{T\in\cT_h}\int_T \kappa_h\nabla v\cdot\nabla w\,d\vx
    -
    \sum_{e\in\Eh}\int_e
    \{\!\{\kappa_h\nabla v\}\!\}_{\beta_e}\cdot\vn_e\,
    \llb w\rrb\,ds
    \nonumber\\
    &\quad
    -
    \sum_{e\in\Eh}\int_e
    \{\!\{\kappa_h\nabla w\}\!\}_{\beta_e}\cdot\vn_e\,
    \llb v\rrb\,ds
    +
    \sum_{e\in\Eh}\int_e
    \sigma \kappa_e h_e^{-1}\,
    \llb v\rrb\,\llb w\rrb\,ds .
\end{align}
Here $\sigma>0$ is a dimensionless penalty parameter chosen sufficiently large, and $h_e$ denotes the length of the edge $e$.
For sufficiently large $\sigma$, the bilinear form $a_h^{\kappa}(\cdot,\cdot)$ is coercive with respect to the DG energy norm introduced below.

\subsection{Discrete Problem}
\label{sec:discrete_problem}

Let $\mathcal{L}_h:H^s(\Omega;\cT_h)\to V_h$ be defined by
\begin{equation}\label{discrete_L}
    (\mathcal{L}_h v,w_h)_{L^2(\Omega)}
    =
    a_h^\kappa(v,w_h)
    \qquad
    \forall\,w_h\in V_h .
\end{equation}
Here $s>3/2$ is chosen so that the traces appearing in $a_h^\kappa(\cdot,\cdot)$ are well-defined.

Since $V_h$ is discontinuous, pairing the tracking measure against $v_h\in V_h$ requires a convention on lower-dimensional observation sets. We impose the following mesh-compatibility assumption.

\begin{assumption}\label{ass:observation_mesh}
The meshes $\{\cT_h\}$ are compatible with the observation sets in the following sense: each point $\mathscr{P}_j$ lies in the interior of a mesh element, and, for every curve $\mathscr{C}_l$, the intersection of $\mathscr{C}_l$ with the mesh skeleton has one-dimensional measure zero.
\end{assumption}

Under Assumption~\ref{ass:observation_mesh}, point evaluations of $v_h\in V_h$ at each $\mathscr{P}_j$ are single-valued. Moreover, the trace of $v_h$ is single-valued $ds$-a.e.\ on each curve $\mathscr{C}_l$. Hence the measure pairing $(\,\cdot\,,v_h)_{L^2(\Omega;\nu)}$ is well-defined on $V_h$.

We define the discrete admissible control set by
\begin{equation}\label{eqn:discrete_control_set}
U_{\gamma,h}
:=
\left\{
\eta_h\in W_h:
Q_h\gamma_-|_T
\leq
\eta_h|_T
\leq
Q_h\gamma_+|_T
\quad
\forall\,T\in\cT_h
\right\},
\end{equation}
where $Q_h:L^2(\Omega)\to W_h$ is the elementwise $L^2$ projection operator. We assume that
\begin{equation}\label{Q_h_positive}
    Q_h\eta|_T \geq 0
    \qquad
    \text{if } \eta\geq0 \text{ a.e. in } T .
\end{equation}
This positivity property implies that the discrete bounds preserve the ordering $Q_h\gamma_-\leq Q_h\gamma_+$ elementwise.

For every $\epsilon>0$ there exists $s_\epsilon\in(1,2)$ sufficiently close to $2$ such that

\begin{equation}\label{eqn:control_projection_estimate}
\|\bgm-Q_h\bgm\|_{L^2(\Omega)}
\leq
C h^{1-\epsilon}\|\bgm\|_{W^{1,s_\epsilon}(\Omega)},
\end{equation}
This estimate is one of the places where the reduced adjoint regularity affects the control error analysis.

For each $\gamma_h\in U_{\gamma,h}$, let $p_h\in V_h$ denote the unique solution of
\begin{equation}\label{eqn:discrete_state_map}
a_h^\kappa(p_h,v_h)
=
\iO (f+\gamma_h)v_h\,d\vx
\qquad
\forall\,v_h\in V_h .
\end{equation}
The homogeneous Dirichlet condition is enforced weakly through the boundary terms of $a_h^\kappa(\cdot,\cdot)$ and is not imposed strongly in $V_h$.

The discrete problem can be written in constrained form: find $(\bp_h,\bgm_h)\in V_h\times U_{\gamma,h}$ minimizing
\begin{equation}\label{eqn:discrete_ocp}
    \frac1{2}\iO |p_h-p_d|^2\,d\nu
    +
    \frac{\beta}{2}\iO|\gamma_h|^2\,d\vx
\end{equation}
subject to
\begin{equation}\label{discrete_pde_constraint}
a_h^\kappa(p_h,v_h)
=
\iO (f+\gamma_h)v_h\,d\vx
\qquad
\forall\,v_h\in V_h .
\end{equation}

Note that $W_h$ is a closed subspace of $L^2(\Omega)$, whereas $V_h$ is not a subspace of $H^1_0(\Omega)$.
Since $U_{\gamma,h}$ is a nonempty, closed, and convex subset of the finite-dimensional space $W_h$, and since $\beta>0$, the discrete problem \eqref{eqn:discrete_ocp}-\eqref{discrete_pde_constraint} has a unique solution.

As in the continuous problem, the discrete optimal pair $(\bp_h,\bgm_h)$ is characterized by
\begin{equation}\label{discrete_characterization}
    \iO(\bp_h-p_d)(p_h-\bp_h)\,d\nu
    +
    \beta\iO \bgm_h(\gamma_h-\bgm_h)\,d\vx
    \geq0
    \qquad
    \forall\,\gamma_h\in U_{\gamma,h}.
\end{equation}

Let the discrete adjoint state $\bz_h\in V_h$ be defined by
\begin{equation}\label{discrete_adjoint}
    a_h^\kappa(\bz_h,v_h)
    =
    (\bp_h-p_d,v_h)_{L^2(\Omega;\nu)}
    \qquad
    \forall\,v_h\in V_h .
\end{equation}
The discrete first-order optimality conditions are
\begin{subequations}\label{discrete_optimality_conditions}
\begin{align}
a_h^\kappa(\bp_h,v_h)
&=
(f+\bgm_h,v_h)_{L^2(\Omega)}
&&
\forall v_h\in V_h,
\label{discrete_optimality_condition1}\\
a_h^\kappa(\bz_h,v_h)
&=
(\bp_h-p_d,v_h)_{L^2(\Omega;\nu)}
&&
\forall v_h\in V_h,
\label{discrete_optimality_condition2}\\
(\beta\bgm_h+\bz_h,\eta_h-\bgm_h)_{L^2(\Omega)}
&\geq 0
&&
\forall \eta_h\in U_{\gamma,h}.
\label{discrete_optimality_condition3}
\end{align}
\end{subequations}

For later reference, we introduce the discrete product constraint set
\begin{equation}\label{discrete_admissible_set}
    \mathcal{U}_{ad,h}
    :=
    \left\{
    (p_h,\gamma_h,z_h)\in V_h\times W_h\times V_h:
    Q_h\gamma_-|_T
    \leq
    \gamma_h|_T
    \leq
    Q_h\gamma_+|_T
    \quad
    \forall T\in\cT_h
    \right\}.
\end{equation}
As in the continuous case, this product set records the discrete control bounds but does not by itself impose the discrete state equation, the adjoint equation, or the variational inequality.

Define the mesh-dependent DG energy norm $|\cdot|_{H^1(\Omega;\cT_h)}$ by
\begin{equation}\label{brokenH1norm}
    |v|^2_{H^1(\Omega;\cT_h)}
    :=
    \sum_{T\in \cT_h}|v|^2_{H^1(T)}
    +
    \sum_{e\in\Eh}
    \frac{\sigma \kappa_e}{h_e}
    \bigl\|\llb v\rrb\bigr\|^2_{L^2(e)} ,
\end{equation}
with the edge weight $\kappa_e$ from \eqref{eqn:kappa_e_definition}, consistent with the penalty term in \eqref{DG_bilinear}.
Because the boundary-edge penalty terms are included, this quantity is a norm on $V_h$.

To establish well-posedness of the DG discretization, we verify the continuity and coercivity of $a_h^\kappa(\cdot,\cdot)$.
By the trace theorem, the standard inverse estimates, the definition of $\kappa_e$, and the uniform ellipticity \eqref{eqn:uniform_ellipticity}, there is a constant $C>0$, independent of $h$, such that for every $v_h\in V_h$,
\begin{equation}\label{est:inverse}
    \sum_{e\in\Eh}\kappa_e^{-1}h_e
    \|\{\vK\nabla v_h\}\|^2_{L^2(e)}
    \leq
    C\sum_{T\in\cT_h}|v_h|^2_{H^1(T)} .
\end{equation}
From the Cauchy--Schwarz inequality and \eqref{est:inverse}, we obtain
\begin{align}\label{est:sym}
    \sum_{e\in\Eh}\int_e\{\vK\nabla v_h\}\llb w_h\rrb\,ds
    &\leq
    \left(
    \sum_{e\in\Eh}
    \kappa_e^{-1}h_e
    \|\{\vK\nabla v_h\}\|_{L^2(e)}^2
    \right)^{1/2}
    \left(
    \sum_{e\in\Eh}
    \kappa_e h_e^{-1}
    \|\llb w_h\rrb\|_{L^2(e)}^2
    \right)^{1/2}
    \nonumber\\
    &\leq
    C
    \left(
    \sum_{T\in\cT_h}|v_h|_{H^1(T)}^2
    \right)^{1/2}
    \left(
    \sum_{e\in\Eh}
    \kappa_e h_e^{-1}
    \|\llb w_h\rrb\|_{L^2(e)}^2
    \right)^{1/2}.
\end{align}
Hence, by \eqref{brokenH1norm} and \eqref{est:sym}, there exists a constant $C_\dagger>0$ such that
\begin{equation}\label{continuity_DG}
    |a_h^\kappa(v_h,w_h)|
    \leq
    C_\dagger
    |v_h|_{H^1(\Omega;\cT_h)}
    |w_h|_{H^1(\Omega;\cT_h)}
    \qquad
    \forall v_h,w_h\in V_h .
\end{equation}
Also, using \eqref{eqn:uniform_ellipticity}, \eqref{est:inverse}, \eqref{est:sym}, and the arithmetic--geometric mean inequality, there exists a constant $C_\ddagger>0$ such that
\begin{align}\label{coercivity_DG}
    a_h^\kappa(v_h,v_h)
    &\geq
    k_0\sum_{T\in\cT_h}|v_h|^2_{H^1(T)}
    -
    C
    \left(
    \sum_{T\in\cT_h}|v_h|^2_{H^1(T)}
    \right)^{1/2}
    \left(
    \sum_{e\in \Eh}
    \kappa_e h_e^{-1}
    \|\llb v_h\rrb\|_{L^2(e)}^2
    \right)^{1/2}
    \nonumber\\
    &\quad
    +
    \sigma\sum_{e\in \Eh}
    \kappa_e h_e^{-1}
    \|\llb v_h\rrb\|_{L^2(e)}^2
    \nonumber\\
    &\geq
    \frac{k_0}{2}
    \sum_{T\in\cT_h}|v_h|^2_{H^1(T)}
    +
    \left(\sigma-\frac{C^2}{2k_0}\right)
    \sum_{e\in \Eh}
    \kappa_e h_e^{-1}
    \|\llb v_h\rrb\|_{L^2(e)}^2
    \nonumber\\
    &\geq
    C_\ddagger
    |v_h|^2_{H^1(\Omega;\cT_h)}
    \qquad
    \forall v_h\in V_h,
\end{align}
provided $\sigma$ is sufficiently large, which is assumed henceforth.



\subsection{Local Mass Conservation}
\label{sec:local_mass_conservation}

One of the key properties of DG methods is local mass conservation.
Let $T\in\cT_h$ and let $\vn_T$ denote the outward unit normal of $T$ on $\partial T$.
For each edge $e\subset\partial T$, define
\begin{equation}\label{eqn:orientation_sign}
s_{T,e}
:=
\begin{cases}
1, & \text{if } e\in\Eho \text{ and } T=T_+,\\
-1, & \text{if } e\in\Eho \text{ and } T=T_-,\\
1, & \text{if } e\subset\partial\Omega .
\end{cases}
\end{equation}
Then $\vn_T=s_{T,e}\vn_e$ on each edge $e\subset\partial T$.

For the continuous state equation, integration by parts on $T$ gives
\begin{equation}\label{lmc1}
    \int_T -\nabla\cdot(\vK\nabla p)\,d\vx
    =
    -\int_{\partial T}\vK\nabla p\cdot \vn_T\,ds
    =
    \int_T F\,d\vx,
    \qquad
    F:=f+\gamma .
\end{equation}

We now derive the discrete analogue. Let $\chi_T\in V_h$ be the elementwise constant test function defined by
\[
\chi_T|_T=1,
\qquad
\chi_T|_{T'}=0
\quad
\text{for } T'\neq T .
\]
Since $\nabla\chi_T=0$ on every element, the element-gradient term and the symmetric consistency term involving $\{\vK\nabla \chi_T\}$ vanish in \eqref{DG_bilinear}. Moreover, for each $e\subset\partial T$,
\[
\llb \chi_T\rrb = s_{T,e}.
\]
Taking $v_h=\chi_T$ in the discrete state equation \eqref{discrete_pde_constraint}, with
\[
F_h:=f+\bgm_h,
\]
therefore yields
\begin{equation}\label{eqn:discrete_local_balance_raw}
    \sum_{e\subset\partial T}
    \int_e
    s_{T,e}
    \left(
    -\{\vK\nabla \bp_h\}
    +
    \sigma\kappa_e h_e^{-1}\llb \bp_h\rrb
    \right)\,ds
    =
    \int_T F_h\,d\vx .
\end{equation}

We define the outward numerical normal flux on $e\subset\partial T$ by
\begin{equation}\label{flux}
    \widehat q_{h,T}
    :=
    s_{T,e}
    \left(
    -\{\vK\nabla \bp_h\}
    +
    \sigma\kappa_e h_e^{-1}\llb \bp_h\rrb
    \right).
\end{equation}
Here $\{\vK\nabla \bp_h\}$ is the averaged normal flux with respect to the fixed normal $\vn_e$ introduced in Section~\ref{sec:discrete_problem}, and the factor $s_{T,e}$ converts it into the outward flux for the element $T$.

Then \eqref{eqn:discrete_local_balance_raw} becomes the local conservation law
\begin{equation}\label{eqn:discrete_local_mass_conservation}
    \sum_{e\subset\partial T}
    \int_e \widehat q_{h,T}\,ds
    =
    \int_T (f+\bgm_h)\,d\vx
    \qquad
    \forall T\in\cT_h .
\end{equation}
Thus the numerical flux \eqref{flux} balances the element source on every element, including boundary elements, where the boundary jump and flux conventions of the SIPG formulation apply.

\section{Convergence Analysis}
\label{sec:convergence_analysis}

In this section, we derive a priori error estimates for the discrete optimal control problem. The analysis combines three ingredients: the stability and approximation properties of the SIPG discretization, the reduced regularity of the adjoint state induced by the measure-valued tracking term, and the variational inequality arising from the pointwise control constraints. 
We first collect several auxiliary results for the DG discretization, then introduce the interpolation, and Ritz
operator.
Throughout, $C$ denotes a generic positive constant, independent of the mesh size $h$, that may change from line to line.

We write
\begin{equation}\label{eqn:continuous_operator_L}
\cL v := -\nabla\cdot(\vK\nabla v).
\end{equation}
In addition to the DG energy norm $|\cdot|_{H^1(\Omega;\cT_h)}$ defined in \eqref{brokenH1norm}, we use the full broken DG norm
\begin{equation}\label{eqn:full_broken_DG_norm}
\|v\|^2_{H^1(\Omega;\cT_h)}
:=
\|v\|^2_{L^2(\Omega)}
+
|v|^2_{H^1(\Omega;\cT_h)} .
\end{equation}
Since the boundary-edge penalty terms are included in \eqref{brokenH1norm}, the discrete Poincar\'e inequality gives
\begin{equation}\label{eqn:DG_poincare}
\|v_h\|_{L^2(\Omega)}
\leq
C |v_h|_{H^1(\Omega;\cT_h)}
\qquad
\forall v_h\in V_h .
\end{equation}

We begin with two stability results for the discrete bilinear form and a regularity-dependent bound used in the approximation analysis. The first lemma shows that the discrete solution operator is bounded in both the $L^2$ and DG energy norms; it underlies the recovery of the state error from the control error in Section~\ref{sec:error_estimates}.

\begin{lem}\label{lem:discrete_bound}
Let $g\in L^2(\Omega)$ and let $v_h\in V_h$ satisfy
\begin{equation}\label{useful}
    a_h^\kappa(v_h,u_h)
    =
    \iO g\,u_h\,d\vx
    \qquad
    \forall u_h\in V_h.
\end{equation}
Then
\begin{align}
    \|v_h\|_{L^2(\Omega)}
    &\leq
    C\|g\|_{L^2(\Omega)}, \label{eqn:discrete_bound_L2}\\
    |v_h|_{H^1(\Omega;\cT_h)}
    &\leq
    C\|g\|_{L^2(\Omega)}. \label{eqn:discrete_bound_H1}
\end{align}
\end{lem}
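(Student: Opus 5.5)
The plan is a standard energy argument: test \eqref{useful} with the solution itself, then combine coercivity \eqref{coercivity_DG}, Cauchy--Schwarz, and the discrete Poincar\'e inequality \eqref{eqn:DG_poincare}. Existence and uniqueness of $v_h$ are not at issue, since coercivity on the finite-dimensional space $V_h$ makes \eqref{useful} uniquely solvable. So we only need the a priori bounds.

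\textbf{Energy bound.} Take $u_h=v_h$ in \eqref{useful}. The coercivity estimate \eqref{coercivity_DG}, valid for $\sigma$ sufficiently large as assumed henceforth, gives
\[
C_\ddagger |v_h|^2_{H^1(\Omega;\cT_h)}
\leq a_h^\kappa(v_h,v_h)
=\iO g\,v_h\,d\vx
\leq \|g\|_{L^2(\Omega)}\|v_h\|_{L^2(\Omega)} .
\]
The discrete Poincar\'e inequality \eqref{eqn:DG_poincare} bounds $\|v_h\|_{L^2(\Omega)}$ by $C|v_h|_{H^1(\Omega;\cT_h)}$. Substituting this and dividing by $|v_h|_{H^1(\Omega;\cT_h)}$ (the case $v_h=0$ is trivial) yields
\[
|v_h|_{H^1(\Omega;\cT_h)}\leq C C_\ddagger^{-1}\|g\|_{L^2(\Omega)},
\]
which is \eqref{eqn:discrete_bound_H1}.

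\textbf{$L^2$ bound.} Applying \eqref{eqn:DG_poincare} once more gives \eqref{eqn:discrete_bound_L2} immediately. No duality argument is needed, because only $\|g\|_{L^2(\Omega)}$ appears on the right-hand side rather than a higher power of $h$.

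\textbf{Main obstacle: $h$-independence of the constants.} The only delicate point is that the constants must not depend on $h$.
\begin{itemize}
\item The coercivity constant $C_\ddagger$ depends only on $k_0$, on the inverse/trace constant in \eqref{est:inverse}, and on $\sigma$. It is uniform under shape regularity, and the harmonic weights $\kappa_e$ are bounded above and below by $k_0,k_1$-type bounds.
\item The discrete Poincar\'e constant in \eqref{eqn:DG_poincare} must likewise be uniform. This relies on the boundary-edge penalty in \eqref{brokenH1norm} and on $\kappa_e\geq \min\kappa>0$, so that the $\kappa_e$-weighted jump seminorm dominates the unweighted one used in the standard broken Poincar\'e inequality (Brenner's). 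If needed, I would note explicitly that $\kappa_e\geq k_0$ to transfer that inequality to the weighted norm.
\end{itemize}
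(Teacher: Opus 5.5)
Your proposal is correct and matches the paper's proof: test with $u_h=v_h$, apply coercivity \eqref{coercivity_DG}, Cauchy--Schwarz, and the discrete Poincar\'e inequality \eqref{eqn:DG_poincare} to get the energy bound, then apply Poincar\'e once more for the $L^2$ bound. Your added remarks on the $h$-independence of $C_\ddagger$ and the Poincar\'e constant are consistent with the paper's assumptions but not needed beyond what the paper already records.
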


\begin{proof}
Taking $u_h=v_h$ in \eqref{useful} and using the coercivity estimate \eqref{coercivity_DG}, the Cauchy--Schwarz inequality, and the discrete Poincar\'e inequality \eqref{eqn:DG_poincare}, we obtain
\begin{align*}
    |v_h|^2_{H^1(\Omega;\cT_h)}
    &\leq
    C a_h^\kappa(v_h,v_h)
    =
    C\iO g\,v_h\,d\vx  \\
    &\leq
    C\|g\|_{L^2(\Omega)}\|v_h\|_{L^2(\Omega)}
    \leq
    C\|g\|_{L^2(\Omega)}|v_h|_{H^1(\Omega;\cT_h)} .
\end{align*}
This proves \eqref{eqn:discrete_bound_H1}. The $L^2$ estimate \eqref{eqn:discrete_bound_L2} then follows immediately from \eqref{eqn:DG_poincare}.
\end{proof}

The next result is a discrete Sobolev inequality, which is needed to control the pointwise part of the measure-tracking term. 

\begin{lem}\label{lem:discreteSobolevIneq}
(Discrete Sobolev Inequality \cite{brenner2004discrete})
For $v_h\in V_h$,
\begin{equation}\label{eqn:discrete_sobolev}
    \|v_h\|_{L^\infty(\Omega)}
    \leq
    C(1+|\ln h|)^{1/2}
    \|v_h\|_{H^1(\Omega;\cT_h)},
\end{equation}
where $C$ is independent of $h$.
\end{lem}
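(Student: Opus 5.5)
The plan is to follow the standard route of \cite{brenner2004discrete}. We reduce the discontinuous case to the continuous one by constructing a conforming companion, and then handle the conforming function with a logarithmic Sobolev inequality for finite element functions.

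\textbf{Step 1 (enriching operator).} Let $\tilde V_h$ be the conforming $\mathbb{Q}_1$ space on $\cT_h$, without boundary conditions. Define $E_h:V_h\to\tilde V_h$ by nodal averaging: at each vertex $\vx\in\cV_h$, set $(E_hv_h)(\vx)$ to be the mean of the values $v_h|_T(\vx)$ over the elements $T$ containing $\vx$. By shape regularity, the number of elements meeting at a vertex is uniformly bounded. The standard local estimate bounds the difference $v_h|_T(\vx)-(E_hv_h)(\vx)$ by the jumps of $v_h$ across the edges incident to $\vx$. This gives
\[
\|v_h-E_hv_h\|_{L^\infty(T)}\le C\sum_{e\ni \text{vertex of }T}\|\llb v_h\rrb\|_{L^\infty(e)}
\le C\sum_{e}h_e^{-1/2}\|\llb v_h\rrb\|_{L^2(e)},
\]
where the second inequality is a one-dimensional inverse estimate. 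Since the number of such edges is bounded, and $\kappa_e$ is bounded below by \eqref{eqn:uniform_ellipticity}, we obtain
\[
\|v_h-E_hv_h\|_{L^\infty(\Omega)}\le C\Bigl(\sum_{e\in\Eh}h_e^{-1}\|\llb v_h\rrb\|_{L^2(e)}^2\Bigr)^{1/2}\le C|v_h|_{H^1(\Omega;\cT_h)}.
\]
The same local argument, combined with inverse estimates, also gives $|E_hv_h|_{H^1(\Omega)}\le C|v_h|_{H^1(\Omega;\cT_h)}$ and $\|E_hv_h\|_{L^2(\Omega)}\le C\|v_h\|_{H^1(\Omega;\cT_h)}$.

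\textbf{Step 2 (conforming log-Sobolev bound).} For $w\in\tilde V_h$ in two dimensions we use the classical estimate
\[
\|w\|_{L^\infty(\Omega)}\le C(1+|\ln h|)^{1/2}\|w\|_{H^1(\Omega)}.
\]
I would prove it as follows. First, choose the element $T$ on which $|w|$ attains its maximum; there, an inverse estimate gives $\|w\|_{L^\infty(T)}\le Ch^{-2/q}\|w\|_{L^q(T)}$. Next, apply the Sobolev inequality $\|w\|_{L^q(\Omega)}\le C\sqrt{q}\,\|w\|_{H^1(\Omega)}$, which holds for all $q<\infty$ with this explicit $\sqrt q$ dependence in two dimensions. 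Finally, choose $q=|\ln h|$, so that $h^{-2/q}=e^2$ stays bounded.

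\textbf{Step 3.} By the triangle inequality,
\[
\|v_h\|_{L^\infty}\le\|v_h-E_hv_h\|_{L^\infty}+\|E_hv_h\|_{L^\infty}.
\]
The first term is bounded using Step 1. For the second term we apply Step 2 together with the $H^1$ stability of $E_h$. This yields \eqref{eqn:discrete_sobolev}.

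The main obstacle is Step 2, specifically keeping the constant in the Sobolev embedding at exactly the order $\sqrt q$. This is what produces the exponent $1/2$ on the logarithm. I would take it from the Trudinger-type bound, or cite it from \cite{brenner2004discrete} directly.
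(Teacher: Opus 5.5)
Your argument is correct and is essentially the proof in \cite{brenner2004discrete}. The paper itself gives no proof and only cites that reference. That proof uses an averaging enriching operator into the conforming $\mathbb{Q}_1$ space, whose defect is bounded in $L^\infty$ by the jump terms. This is combined with the conforming discrete Sobolev inequality, obtained from $\|w\|_{L^q(\Omega)}\le C\sqrt{q}\,\|w\|_{H^1(\Omega)}$ and the choice $q\approx|\ln h|$.

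One hypothesis should be stated explicitly. Your inverse estimate $\|w\|_{L^\infty(T)}\le Ch^{-2/q}\|w\|_{L^q(T)}$ uses the global $h$ instead of $h_T$, so it requires $\cT_h$ to be quasi-uniform. The paper only assumes shape regularity, but the inequality with $h=\max_T h_T$ genuinely needs quasi-uniformity: on strongly graded meshes the correct factor involves $|\ln \min_T h_T|$. The cited source does assume quasi-uniformity.
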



\begin{remark}
It follows from \cite{brenner2023symmetric} 
that, for every $v\in H^1_0(\Omega)$ with $\cL v\in L^2(\Omega)$,
\begin{equation}\label{eqn:remark_broken}
    |v|_{H^1(\Omega;\cT_h)}
    \leq
    C\bigl(
    |v|_{H^1(\Omega)}
    +
    h^\alpha \|\cL v\|_{L^2(\Omega)}
    \bigr),
\end{equation}
where $\alpha\in (1/2,1]$ is the elliptic regularity index \cite{dauge1988elliptic}.
\end{remark}

We also introduce the combined norm used in the interpolation and projection estimates below. It contains both the DG energy contribution and the measure-tracking contribution.

Define
\begin{equation}\label{eqn:energy_norm_h}
    \|v\|^2_h
    :=
    \beta |v|^2_{H^1(\Omega;\cT_h)}
    +
    \|v\|^2_{L^2(\Omega;\nu)}.
\end{equation}

\subsection{Operators}
\label{sec:operators}

To derive error estimates for the continuous optimal variables in $V_h$, we introduce two operators: the nodal interpolation operator $\Pi_h$ and the Ritz projection operator $\mathfrak{R}_h$ associated with $a_h^\kappa$.

\subsubsection{The interpolation operator}
\label{subsection:interpolation}


Let $\Pi_T$ denote the $\mathbb{Q}_1$ nodal interpolation operator on $T$. Then the standard interpolation estimate gives
\begin{equation}\label{standard_interp_error}
    \|v-\Pi_T v\|_{L^2(T)}
    + h_T|v-\Pi_T v|_{H^1(T)}
    + h_T^2|v-\Pi_T v|_{H^2(T)}
    \leq
    Ch_T^2\|v\|_{H^2(T)}
    \qquad
    \forall\,v\in H^2(T),
\end{equation}
see, for example, \cite{brenner2008mathematical,ciarlet2002finite,riviere2008discontinuous}.

Let $\tilde V_h$ denote the conforming $\mathbb{Q}_1$ finite element subspace associated with $\cT_h$. For sufficiently regular $v$, we define $\Pi_h v\in \tilde V_h$ elementwise by
\begin{equation}\label{eqn:global_interpolant_def}
    (\Pi_h v)|_T
    :=
    \Pi_T(v|_T)
    \qquad
    \forall T\in\cT_h .
\end{equation}
Recall that
\[
\cL v:=-\nabla\cdot(\vK\nabla v).
\]
Combining the elementwise interpolation estimate with the elliptic regularity estimates used in Section~\ref{sec:Regularities}, we use the following global interpolation bounds:
\begin{equation}\label{eqn:interp_full}
    \|v-\Pi_hv\|_{L^2(\Omega)}
    +
    h|v-\Pi_h v|_{H^1(\Omega;\cT_h)}
    \leq
    Ch^{1+\alpha}\|\cL v\|_{L^2(\Omega)}
    \qquad
    \forall v\in H^1_0(\Omega),\ \cL v\in L^2(\Omega).
\end{equation}
For the point-observation part of the tracking measure, we also use the corresponding maximum-norm estimate
\begin{equation}\label{eqn:interp_Linfty}
    \|v-\Pi_hv\|_{L^\infty(\Omega)}
    \leq
    Ch^\alpha\|\cL v\|_{L^2(\Omega)},
\end{equation} 
which is valid under the same regularity assumptions; see, for example, \cite{brenner2009multigrid,brenner2023symmetric,riviere2008discontinuous}.
Consequently,
\begin{equation}\label{est:L_interpol_energy}
    \|v-\Pi_hv\|_h
    \leq
    Ch^\alpha\|\cL v\|_{L^2(\Omega)} .
\end{equation}

\subsubsection{The Ritz projection operator}
\label{subsection:ritz_projection}


We define $\mathfrak{R}_h:H^1_0(\Omega)\to V_h$ by
\begin{equation}\label{operator_R}
    a_h^\kappa(\mathfrak{R}_hv, w_h)
    =
    a_h^\kappa(v,w_h)
    \qquad
    \forall\, w_h\in V_h .
\end{equation}
Here the right-hand side is understood by using the consistent SIPG extension of $a_h^\kappa$ to sufficiently regular continuous functions.

\begin{lem}\label{lem:ritz}
For all $v\in H^1_0(\Omega)$ with $\cL v\in L^2(\Omega)$, the following estimates hold:
\begin{align}
|v-\mathfrak{R}_hv|_{H^1(\Omega;\cT_h)}
&\leq
C h^\alpha\|\cL v\|_{L^2(\Omega)},
\label{est:operator_R1}\\
\|v-\mathfrak{R}_hv\|_{L^2(\Omega)}
&\leq
C h^{2\alpha}\|\cL v\|_{L^2(\Omega)},
\label{est:operator_R2}\\
\|v-\mathfrak{R}_hv\|_{L^2(\Omega;\nu)}
&\leq
C h^{\min\{\alpha,2\alpha-1\}}\|\cL v\|_{L^2(\Omega)}.
\label{est:operator_R3}
\end{align}
\end{lem}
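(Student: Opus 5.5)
The plan is to prove the three estimates in order. The energy bound follows from a Céa-type argument, the $L^2$ bound from an Aubin--Nitsche duality argument, and the measure-norm bound by splitting $\nu$ into its point, curve and subdomain parts.

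\emph{Energy estimate.} The starting point is Galerkin orthogonality, which is built into \eqref{operator_R}: $a_h^\kappa(v-\mathfrak{R}_hv,w_h)=0$ for all $w_h\in V_h$. Split the error as
\[
v-\mathfrak{R}_hv=(v-\Pi_hv)+(\Pi_hv-\mathfrak{R}_hv),
\]
and set $\xi_h:=\Pi_hv-\mathfrak{R}_hv\in V_h$. By coercivity \eqref{coercivity_DG} and orthogonality,
\[
C_\ddagger|\xi_h|^2_{H^1(\Omega;\cT_h)}\le a_h^\kappa(\xi_h,\xi_h)=a_h^\kappa(\Pi_hv-v,\xi_h).
\]
The right-hand side needs a continuity bound for $a_h^\kappa$ when one argument is the non-discrete function $\Pi_hv-v$. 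I would bound it by the DG energy norm of $v-\Pi_hv$ plus the weighted term
\[
\Bigl(\sum_e\kappa_e^{-1}h_e\|\{\vK\nabla(v-\Pi_hv)\}\|^2_{L^2(e)}\Bigr)^{1/2}.
\]
This weighted term is handled elementwise by the trace inequality $\|w\|_{L^2(e)}^2\le C(h^{-1}\|w\|^2_{L^2(T)}+h|w|^2_{H^1(T)})$ applied to $w=\nabla(v-\Pi_hv)$. Combined with \eqref{standard_interp_error} and \eqref{eqn:interp_full}, this gives a bound of order $h^\alpha\|\cL v\|_{L^2(\Omega)}$. On convex domains one has $\alpha=1$, so $H^2$ regularity is available and the $H^2$ seminorm of $v-\Pi_hv$ is controlled. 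Since $\Pi_hv$ is conforming and $v\in H^1_0$, the jumps of $v-\Pi_hv$ vanish. The triangle inequality then yields \eqref{est:operator_R1}.

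\emph{$L^2$ estimate.} Let $\phi\in H^1_0(\Omega)$ solve $\cL\phi=v-\mathfrak{R}_hv$. By adjoint consistency of SIPG (the form is symmetric and consistent for $\phi\in H^2$),
\[
\|v-\mathfrak{R}_hv\|^2_{L^2(\Omega)}=a_h^\kappa(v-\mathfrak{R}_hv,\phi)=a_h^\kappa(v-\mathfrak{R}_hv,\phi-\Pi_h\phi).
\]
Applying the same extended continuity bound, \eqref{est:operator_R1}, and \eqref{est:L_interpol_energy} for $\phi$ gives
\[
\|v-\mathfrak{R}_hv\|^2_{L^2(\Omega)}\le Ch^\alpha\|\cL v\|_{L^2(\Omega)}\;h^\alpha\|v-\mathfrak{R}_hv\|_{L^2(\Omega)},
\]
which is \eqref{est:operator_R2}.

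\emph{Measure-norm estimate.} I would split $\nu$ into its three components.
\begin{itemize}
\item For the subdomain part, the weights $w_\mathscr{E}^m$ are bounded, so $\|\cdot\|_{L^2(\mathscr{E};w)}\le C\|\cdot\|_{L^2(\Omega)}$, and \eqref{est:operator_R2} gives the bound $h^{2\alpha}$.
\item For the point part, write $v-\mathfrak{R}_hv=(v-\Pi_hv)+\xi_h$. The term $v-\Pi_hv$ is bounded by \eqref{eqn:interp_Linfty}. For $\xi_h$, apply the discrete Sobolev inequality \eqref{eqn:discrete_sobolev}: $\|\xi_h\|_{L^\infty}\le C|\ln h|^{1/2}\|\xi_h\|_{H^1(\Omega;\cT_h)}\le C|\ln h|^{1/2}h^\alpha$. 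This leaves a log factor.
\item For the curve part, Assumption~\ref{ass:observation_mesh} makes the traces single-valued. Apply an elementwise trace inequality along $\mathscr{C}_l\cap T$: for a smooth curve crossing $T$, $\|w\|^2_{L^2(\mathscr{C}\cap T)}\le C(h^{-1}\|w\|^2_{L^2(T)}+h|w|^2_{H^1(T)})$. With $w=v-\mathfrak{R}_hv$, \eqref{est:operator_R1} and \eqref{est:operator_R2} give $h^{-1}h^{4\alpha}+h\,h^{2\alpha}$, so the curve term is $O(h^{2\alpha-1/2}+h^{\alpha+1/2})$. This is dominated by $h^{\min\{\alpha,2\alpha-1\}}$.
\end{itemize}

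\emph{Main obstacle.} The hardest step is the point term, because of the logarithm: $h^\alpha|\ln h|^{1/2}$ is not bounded by $Ch^\alpha$. To avoid it, I would bound $\xi_h$ pointwise by an inverse estimate instead:
\[
\|\xi_h\|_{L^\infty(T)}\le Ch^{-1}\|\xi_h\|_{L^2(T)}\le Ch^{-1}\bigl(\|v-\Pi_hv\|_{L^2}+\|v-\mathfrak{R}_hv\|_{L^2}\bigr)\le Ch^{2\alpha-1}\|\cL v\|_{L^2(\Omega)}.
\]
This is exactly where the exponent $\min\{\alpha,2\alpha-1\}$ in \eqref{est:operator_R3} comes from. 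Collecting the three component bounds proves the lemma.
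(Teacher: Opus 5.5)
Your proposal is correct and follows the paper's proof essentially step by step. It uses the same C\'ea-type bound through $\Pi_hv-\mathfrak{R}_hv$, the same Aubin--Nitsche duality argument, and the same point/curve/subdomain split, in which the inverse estimate $\|\Pi_hv-\mathfrak{R}_hv\|_{L^\infty(\Omega)}\le Ch^{-1}\|\Pi_hv-\mathfrak{R}_hv\|_{L^2(\Omega)}$, rather than the discrete Sobolev inequality, produces the exponent $\min\{\alpha,2\alpha-1\}$. The only difference is that you explicitly justify the continuity of $a_h^\kappa$ when an argument lies outside $V_h$, via the weighted flux term, a trace inequality, and $H^2$ regularity (i.e.\ $\alpha=1$, as in the paper's convex, smooth-$\vK$ setting), whereas the paper simply invokes \eqref{continuity_DG}, which is stated only on $V_h$.
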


\begin{proof}
We first prove the energy estimate. By coercivity, the definition of $\mathfrak{R}_h$, and continuity of $a_h^\kappa(\cdot,\cdot)$,
\begin{align*}
|\Pi_hv-\mathfrak{R}_hv|^2_{H^1(\Omega;\cT_h)}
&\leq
C a_h^\kappa(\Pi_hv-\mathfrak{R}_hv,\Pi_hv-\mathfrak{R}_hv)\\
&=
C a_h^\kappa(\Pi_hv-v,\Pi_hv-\mathfrak{R}_hv)\\
&\leq
C|\Pi_hv-v|_{H^1(\Omega;\cT_h)}
|\Pi_hv-\mathfrak{R}_hv|_{H^1(\Omega;\cT_h)} .
\end{align*}
Therefore,
\[
|\Pi_hv-\mathfrak{R}_hv|_{H^1(\Omega;\cT_h)}
\leq
C|\Pi_hv-v|_{H^1(\Omega;\cT_h)} .
\]
Using \eqref{eqn:interp_full} and the triangle inequality gives \eqref{est:operator_R1}.

Next we prove the $L^2$ estimate by a duality argument. Let $\phi\in H^1_0(\Omega)$ solve
\begin{equation}\label{eqn:dual_problem_ritz}
\cL\phi
=
v-\mathfrak{R}_hv
\quad\text{in }\Omega,
\qquad
\phi=0
\quad\text{on }\partial\Omega .
\end{equation}
Then
\[
\|\cL\phi\|_{L^2(\Omega)}
=
\|v-\mathfrak{R}_hv\|_{L^2(\Omega)} .
\]
By adjoint consistency of the SIPG method and the definition of the Ritz projection,
\begin{align*}
\|v-\mathfrak{R}_hv\|^2_{L^2(\Omega)}
&=
(\cL\phi,v-\mathfrak{R}_hv)_{L^2(\Omega)}\\
&=
a_h^\kappa(\phi,v-\mathfrak{R}_hv)\\
&=
a_h^\kappa(\phi-\Pi_h\phi,v-\mathfrak{R}_hv).
\end{align*}
Using continuity, \eqref{eqn:interp_full}, and \eqref{est:operator_R1}, we obtain
\begin{align*}
\|v-\mathfrak{R}_hv\|^2_{L^2(\Omega)}
&\leq
C|\phi-\Pi_h\phi|_{H^1(\Omega;\cT_h)}
|v-\mathfrak{R}_hv|_{H^1(\Omega;\cT_h)}\\
&\leq
C h^\alpha\|\cL\phi\|_{L^2(\Omega)}
h^\alpha\|\cL v\|_{L^2(\Omega)}\\
&=
C h^{2\alpha}
\|v-\mathfrak{R}_hv\|_{L^2(\Omega)}
\|\cL v\|_{L^2(\Omega)} .
\end{align*}
This proves \eqref{est:operator_R2}.

It remains to prove the measure-norm estimate \eqref{est:operator_R3}. Let
\[
e_h:=v-\mathfrak{R}_hv .
\]
We estimate separately the point, curve, and subdomain contributions in the definition of $\nu$.

For the point contribution, write
\[
e_h=(v-\Pi_hv)+(\Pi_hv-\mathfrak{R}_hv).
\]
To handle the second term of the right hand side, the inverse inequality and the $L^2$ estimates give
\begin{align*}
\|\Pi_hv-\mathfrak{R}_hv\|_{L^\infty(\Omega)}
&\leq
Ch^{-1}\|\Pi_hv-\mathfrak{R}_hv\|_{L^2(\Omega)}\\
&\leq
Ch^{-1}
\left(
\|\Pi_hv-v\|_{L^2(\Omega)}
+
\|v-\mathfrak{R}_hv\|_{L^2(\Omega)}
\right)\\
&\leq
C h^{-1}
\left(
h^{1+\alpha}
+
h^{2\alpha}
\right)
\|\cL v\|_{L^2(\Omega)}\\
&\leq
C h^{\min\{\alpha,2\alpha-1\}}
\|\cL v\|_{L^2(\Omega)} .
\end{align*}
Combining this with \eqref{eqn:interp_Linfty}, we obtain
\begin{equation}\label{eqn:ritz_Linfty_bound}
\|e_h\|_{L^\infty(\Omega)}
\leq
C h^{\min\{\alpha,2\alpha-1\}}
\|\cL v\|_{L^2(\Omega)} .
\end{equation}

For the subdomain contribution, \eqref{est:operator_R2} gives
\begin{equation}\label{eqn:ritz_subdomain_bound}
\|e_h\|_{L^2(\mathscr{E}_m)}
\leq
\|e_h\|_{L^2(\Omega)}
\leq
C h^{2\alpha}\|\cL v\|_{L^2(\Omega)} .
\end{equation}

For each curve contribution, we use the scaled trace inequality on the elements intersected by $\mathscr{C}_l$. Under the mesh-compatibility assumption for the observation sets,
\begin{equation}\label{eqn:curve_scaled_trace}
\|e_h\|_{L^2(\mathscr{C}_l)}
\leq
C\left(
h^{-1/2}\|e_h\|_{L^2(\Omega)}
+
h^{1/2}|e_h|_{H^1(\Omega;\cT_h)}
\right).
\end{equation}
Using \eqref{est:operator_R1} and \eqref{est:operator_R2}, we find
\begin{align}
\|e_h\|_{L^2(\mathscr{C}_l)}
&\leq
C\left(
h^{-1/2}h^{2\alpha}
+
h^{1/2}h^\alpha
\right)
\|\cL v\|_{L^2(\Omega)}
\nonumber\\
&\leq
C h^{\min\{\alpha+1/2,2\alpha-1/2\}}
\|\cL v\|_{L^2(\Omega)},
\label{eqn:ritz_curve_bound}
\end{align}
where we used $\alpha>1/2$.

Combining \eqref{eqn:ritz_Linfty_bound}, \eqref{eqn:ritz_subdomain_bound}, and \eqref{eqn:ritz_curve_bound}, together with the boundedness of the weights in \eqref{radon}, yields
\[
\|v-\mathfrak{R}_hv\|_{L^2(\Omega;\nu)}
\leq
C h^{\min\{\alpha,2\alpha-1\}}
\|\cL v\|_{L^2(\Omega)} .
\]
This proves \eqref{est:operator_R3}.
\end{proof}

\subsection{Error Estimates}
\label{sec:error_estimates}

In this subsection we establish the main a priori error estimate. We first bound the control error in $L^2(\Omega)$ by combining the continuous and discrete variational inequalities with the projection properties of the admissible control set. We then recover the state error in $L^2(\Omega)$ from the control error. The reduced regularity of the adjoint state enters through the measure-tracking estimates and through the adjoint-coupling estimate stated below.

We use the notation $S_h\xi\in V_h$ for the discrete state associated with any source term $\xi\in L^2(\Omega)$, namely
\begin{equation}\label{eqn:extended_discrete_state_map}
a_h^\kappa(S_h\xi,v_h)
=
\iO (f+\xi)v_h\,d\vx
\qquad
\forall v_h\in V_h .
\end{equation}

\begin{lem}\label{lem:state_from_control}
Let $\bgm,\bgm_h$ be the continuous and discrete optimal controls, with states $\bp=S\bgm$ and $\bp_h=S_h\bgm_h$. Then
\begin{equation}\label{eqn:state_from_control}
    \|\bp-\bp_h\|_{L^2(\Omega)}
    \leq
    \|\bp-S_h\bgm\|_{L^2(\Omega)}
    +
    C\,\|\bgm-\bgm_h\|_{L^2(\Omega)}.
\end{equation}
Moreover,
\begin{equation}\label{eqn:fixed_control_state_error}
    \|\bp-S_h\bgm\|_{L^2(\Omega)}
    \leq
    Ch^{2\alpha}\|\cL\bp\|_{L^2(\Omega)} .
\end{equation}
\end{lem}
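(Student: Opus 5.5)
The proof splits into two parts: a triangle inequality with discrete stability for the first claim, and the Ritz projection estimate for the second.

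\textbf{First estimate.} We write
\[
\bp-\bp_h=(\bp-S_h\bgm)+(S_h\bgm-S_h\bgm_h).
\]
The map $S_h$ is affine with a common datum $f$. Subtracting the defining relations \eqref{eqn:extended_discrete_state_map} for $\xi=\bgm$ and $\xi=\bgm_h$ shows that $v_h:=S_h\bgm-S_h\bgm_h\in V_h$ satisfies
\[
a_h^\kappa(v_h,u_h)=\iO(\bgm-\bgm_h)u_h\,d\vx \qquad \forall u_h\in V_h .
\]
This is exactly \eqref{useful} with $g=\bgm-\bgm_h\in L^2(\Omega)$. Lemma~\ref{lem:discrete_bound}, estimate \eqref{eqn:discrete_bound_L2}, then gives $\|v_h\|_{L^2(\Omega)}\le C\|\bgm-\bgm_h\|_{L^2(\Omega)}$. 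The triangle inequality yields \eqref{eqn:state_from_control}.

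\textbf{Second estimate.} The plan is to identify $S_h\bgm$ with the Ritz projection $\mathfrak{R}_h\bp$ and then apply Lemma~\ref{lem:ritz}. Since $\bp\in H^2(\Omega)\cap H^1_0(\Omega)$ by \eqref{state_H2_regul}, the fluxes $\vK\nabla\bp$ have continuous normal components across interior edges and $\llb\bp\rrb=0$ on all edges. Elementwise integration by parts in $a_h^\kappa(\bp,w_h)$ therefore gives the consistency identity
\[
a_h^\kappa(\bp,w_h)=(\cL\bp,w_h)_{L^2(\Omega)}=(f+\bgm,w_h)_{L^2(\Omega)} \qquad \forall w_h\in V_h .
\]
Here the weighted average $\{\!\{\kappa_h\nabla\bp\}\!\}_{\beta_e}\cdot\vn_e$ reduces to the single-valued normal flux, because $\beta_e+(1-\beta_e)=1$. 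Combined with the definition \eqref{operator_R}, this gives $a_h^\kappa(\mathfrak{R}_h\bp-S_h\bgm,w_h)=0$ for all $w_h\in V_h$. Coercivity \eqref{coercivity_DG} then forces $S_h\bgm=\mathfrak{R}_h\bp$. Estimate \eqref{est:operator_R2} applied with $v=\bp$ gives $\|\bp-S_h\bgm\|_{L^2(\Omega)}\le Ch^{2\alpha}\|\cL\bp\|_{L^2(\Omega)}$, which is \eqref{eqn:fixed_control_state_error}.

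\textbf{Main obstacle.} The only delicate point is the consistency step. It requires checking that the SIPG form, extended to $H^2$ functions as in the definition of $\mathfrak{R}_h$, reproduces $(\cL\bp,w_h)$. The subtlety is the coefficient-weighted averages: the paper's analysis assumes a smooth $\vK$, while the discretization uses a piecewise constant $\kappa_h$. I would state this consistency under the paper's standing convention that $\mathfrak{R}_h$ uses the consistent SIPG extension, exactly as in the duality step of Lemma~\ref{lem:ritz}. With that convention, no further work beyond the two cited lemmas is needed.
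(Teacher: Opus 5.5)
Your proposal is correct and follows the paper's proof essentially step for step. The first bound comes from Lemma~\ref{lem:discrete_bound} applied to $S_h\bgm-S_h\bgm_h$ plus the triangle inequality, and the second from SIPG consistency giving $S_h\bgm=\mathfrak{R}_h\bp$ followed by \eqref{est:operator_R2}; your consistency step is slightly more careful than the paper's, since you note that the weighted-average flux term cancels against the elementwise boundary terms rather than vanishing.
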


\begin{proof}
By linearity of the discrete state equation,
\[
S_h\bgm-\bp_h=S_h\bgm-S_h\bgm_h
\]
satisfies
\begin{equation*}
a_h^\kappa(S_h\bgm-\bp_h,v_h)
=
\iO(\bgm-\bgm_h)v_h\,d\vx
\qquad
\forall v_h\in V_h .
\end{equation*}
Therefore Lemma~\ref{lem:discrete_bound} gives
\[
\|S_h\bgm-\bp_h\|_{L^2(\Omega)}
\leq
C\|\bgm-\bgm_h\|_{L^2(\Omega)}.
\]
The triangle inequality gives \eqref{eqn:state_from_control}.

To prove~\eqref{eqn:fixed_control_state_error}, note that $\bar{p} \in H^2(\Omega) \cap H_0^1(\Omega)$ is continuous 
across element interfaces, so $\llbracket \bar{p} \rrbracket = 0$ on all edges. 
The penalty and symmetry terms in $a_h^\kappa$ therefore vanish when $\bar{p}$ is 
inserted, and elementwise integration by parts gives
\begin{equation*}
    a_h^\kappa(\bar{p},\, v_h) 
    = \int_\Omega (f + \bar{\gamma})\, v_h \,d\mathbf{x} 
    \qquad \forall\, v_h \in V_h.
\end{equation*}
By \eqref{operator_R} and \eqref{eqn:extended_discrete_state_map},
we have $\mathfrak{R}_h \bar{p} = S_h\bar{\gamma}$, and hence 
estimate~\eqref{eqn:fixed_control_state_error} then follows from~\eqref{est:operator_R2}.
\end{proof}
\begin{lem}\label{lem:adjoint_coupling}
Let
\[
\widetilde{\gamma}_h:=Q_h\bgm,
\qquad
\widetilde p_h:=S_h\widetilde{\gamma}_h .
\]
Then, for every $\epsilon>0$,
\begin{equation}\label{eqn:adjoint_coupling_estimate}
(\bz_h-\bz,\widetilde{\gamma}_h-\bgm_h)_{L^2(\Omega)}
\leq
C h^{\min\{\alpha,2\alpha-1,\,1-\epsilon\}}
\|\mathcal{L}_h(\widetilde p_h-\bp_h)\|_{L^2(\Omega)} .
\end{equation}
\end{lem}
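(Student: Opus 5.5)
The plan is to split the adjoint error with an intermediate discrete adjoint driven by the continuous optimal state. Let $z_h^*\in V_h$ solve
\[
a_h^\kappa(z_h^*,v_h)=(\bp-p_d,v_h)_{L^2(\Omega;\nu)}\qquad\forall v_h\in V_h ,
\]
and write $\bz_h-\bz=(\bz_h-z_h^*)+(z_h^*-\bz)$. Set
\[
w_h:=\widetilde\gamma_h-\bgm_h\in W_h,\qquad y_h:=\widetilde p_h-\bp_h=S_h\widetilde\gamma_h-S_h\bgm_h .
\]
Then $a_h^\kappa(y_h,v_h)=(w_h,v_h)$ for all $v_h\in V_h$. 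Since $W_h\subset V_h$, comparing with \eqref{discrete_L} gives $\mathcal L_h y_h=w_h$ exactly. So it suffices to bound both pieces by $Ch^{\min\{\alpha,2\alpha-1,1-\epsilon\}}\|w_h\|_{L^2(\Omega)}$.

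\emph{A Ritz representation of discrete solutions.} I will use this tool repeatedly. If $u_h\in V_h$ solves $a_h^\kappa(u_h,v_h)=(g,v_h)$ with $g\in L^2(\Omega)$, let $u\in H^1_0(\Omega)$ solve $\cL u=g$. By consistency of SIPG (as in the proof of Lemma~\ref{lem:state_from_control}) we have $u_h=\mathfrak R_h u$. Then \eqref{eqn:H2_regulariy_state}, the embedding $H^2\hookrightarrow C(\bar\Omega)$, and \eqref{est:operator_R3} give
\[
\|u_h\|_{L^2(\Omega;\nu)}\le \|u\|_{L^2(\Omega;\nu)}+\|u-\mathfrak R_hu\|_{L^2(\Omega;\nu)}\le C\|g\|_{L^2(\Omega)}.
\]
This bound has no logarithmic factor, so Lemma~\ref{lem:discreteSobolevIneq} is not needed.

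\emph{Second piece.} Let $y\in H^1_0(\Omega)$ solve $\cL y=w_h$, so that $y_h=\mathfrak R_h y$. The transposition definition \eqref{eqn:adjoint_transposition_definition} gives $(\bz,w_h)=(\bp-p_d,y)_{L^2(\Omega;\nu)}$. By symmetry of $a_h^\kappa$,
\[
(z_h^*,w_h)=a_h^\kappa(y_h,z_h^*)=a_h^\kappa(z_h^*,y_h)=(\bp-p_d,y_h)_{L^2(\Omega;\nu)} .
\]
Hence
\[
(z_h^*-\bz,w_h)=(\bp-p_d,\mathfrak R_hy-y)_{L^2(\Omega;\nu)}\le \|\bp-p_d\|_{L^2(\Omega;\nu)}\,Ch^{\min\{\alpha,2\alpha-1\}}\|w_h\|_{L^2(\Omega)},
\]
using \eqref{est:operator_R3}. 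The factor $\|\bp-p_d\|_{L^2(\Omega;\nu)}$ is a fixed constant.

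\emph{First piece, the key sign step.} As above,
\[
(\bz_h-z_h^*,w_h)=a_h^\kappa(\bz_h-z_h^*,y_h)=(\bp_h-\bp,y_h)_{L^2(\Omega;\nu)} .
\]
Insert $\widetilde p_h$ and write $\bp_h-\bp=(\bp_h-\widetilde p_h)+(\widetilde p_h-\bp)=-y_h+(\widetilde p_h-\bp)$. This yields
\[
(\bz_h-z_h^*,w_h)=-\|y_h\|^2_{L^2(\Omega;\nu)}+(\widetilde p_h-\bp,y_h)_{L^2(\Omega;\nu)}\le \|\widetilde p_h-\bp\|_{L^2(\Omega;\nu)}\|y_h\|_{L^2(\Omega;\nu)} .
\]
The first factor splits as $(\widetilde p_h-S_h\bgm)+(S_h\bgm-\bp)$. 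The term $S_h\bgm-\bp=\mathfrak R_h\bp-\bp$ is bounded by $Ch^{\min\{\alpha,2\alpha-1\}}\|\cL\bp\|_{L^2(\Omega)}$ using \eqref{est:operator_R3} and \eqref{state_H2_regul}. The term $\widetilde p_h-S_h\bgm$ is a discrete solution with source $Q_h\bgm-\bgm$. By the Ritz representation it is bounded by $C\|Q_h\bgm-\bgm\|_{L^2(\Omega)}\le Ch^{1-\epsilon}$, using \eqref{eqn:control_projection_estimate} and \eqref{control_regul}. For the second factor, the Ritz representation gives $\|y_h\|_{L^2(\Omega;\nu)}\le C\|w_h\|_{L^2(\Omega)}$.

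Adding the two pieces and recalling $\|w_h\|_{L^2(\Omega)}=\|\mathcal L_h(\widetilde p_h-\bp_h)\|_{L^2(\Omega)}$ yields \eqref{eqn:adjoint_coupling_estimate}.

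The main obstacle is the first piece. A naive estimate would require a bound on $\|\bp_h-\bp\|_{L^2(\Omega;\nu)}$, which is circular because it depends on the control error being estimated. The device that avoids this is inserting $\widetilde p_h$, so that the uncontrolled part appears as the nonpositive term $-\|y_h\|^2_{L^2(\Omega;\nu)}$ and can be dropped. A secondary point is avoiding the logarithm from the discrete Sobolev inequality at point observations. If the Ritz representation argument did not go through, the fallback would be to absorb $(1+|\ln h|)^{1/2}$ into $h^{-\epsilon}$. That only works for the $1-\epsilon$ rate, however, so the log-free $\nu$-bound is what the plan relies on.
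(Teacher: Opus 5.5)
Your proposal is correct and follows essentially the same route as the paper. Your $z_h^*$ is the paper's intermediate adjoint $\hat z_h$, and your two pieces are the paper's terms (I) and (II). You use the same sign trick: inserting $\widetilde p_h$ to produce the dropped nonpositive term $-\|\widetilde p_h-\bp_h\|^2_{L^2(\Omega;\nu)}$. You also use the same Ritz representations, $\widetilde p_h-\bp_h=\mathfrak{R}_h G(\mathcal{L}_h(\widetilde p_h-\bp_h))$ and $\widetilde p_h-\mathfrak{R}_h\bp=\mathfrak{R}_h G(Q_h\bgm-\bgm)$, which give log-free $\nu$-bounds via \eqref{est:operator_R3}.
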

\begin{proof}
Set $\varphi := \tilde{p}_h - \bar{p}_h \in V_h$.
Since $a_h^\kappa(\tilde{p}_h - \bar{p}_h, v_h)
= \int_\Omega (\tilde{\gamma}_h - \bar{\gamma}_h)\,v_h\,d\mathbf{x}$
for all $v_h \in V_h$, the definition \eqref{discrete_L} of $\mathcal{L}_h$ gives
\begin{equation*}
    \mathcal{L}_h \varphi = \tilde{\gamma}_h - \bar{\gamma}_h,
\end{equation*}
so the claimed estimate is equivalent to
\begin{equation*}
    (\bar{z}_h - \bar{z}, \mathcal{L}_h\varphi)_{L^2(\Omega)}
    \leq C h^{\min\{\alpha,2\alpha-1,\,1-\epsilon\}}
    \|\mathcal{L}_h\varphi\|_{L^2(\Omega)}.
\end{equation*}

Define the auxiliary discrete function $\hat{z}_h \in V_h$ by
\begin{equation}\label{aux_zhat}
    a_h^\kappa(v_h, \hat{z}_h)
    = (v_h, \bar{p} - p_d)_{L^2(\Omega;\nu)}
    \quad \forall\, v_h \in V_h,
\end{equation}
and split
\begin{equation*}
    (\bar{z}_h - \bar{z},\, \mathcal{L}_h\varphi)_{L^2(\Omega)}
    = \underbrace{(\bar{z}_h - \hat{z}_h,\, \mathcal{L}_h\varphi)_{L^2(\Omega)}}_{\mathrm{(I)}}
    + \underbrace{(\hat{z}_h - \bar{z},\, \mathcal{L}_h\varphi)_{L^2(\Omega)}}_{\mathrm{(II)}}.
\end{equation*}

\noindent\textbf{Estimate of (I).}
Subtracting the equation for $\hat{z}_h$ from the discrete adjoint
equation~\eqref{discrete_optimality_condition2} gives
\begin{equation*}
    a_h^\kappa(v_h,\, \bar{z}_h - \hat{z}_h)
    = (v_h,\, \bar{p}_h - \bar{p})_{L^2(\Omega;\nu)}
    \quad \forall\, v_h \in V_h.
\end{equation*}
By the definition of $\mathcal{L}_h$ \eqref{discrete_L}, the symmetry of $a_h^\kappa$,
and taking $v_h = \varphi$,
\begin{equation*}
    (\bar{z}_h - \hat{z}_h,\, \mathcal{L}_h\varphi)_{L^2(\Omega)}
    = a_h^\kappa (\varphi,\, \bar{z}_h - \hat{z}_h)
    = (\bar{p}_h - \bar{p},\, \varphi)_{L^2(\Omega;\nu)}.
\end{equation*}
Writing $\bar{p}_h - \bar{p} = (\bar{p}_h - \tilde{p}_h) + (\tilde{p}_h - \bar{p})
= -\varphi + (\tilde{p}_h - \bar{p})$ yields
\begin{equation*}
    (\bar{p}_h - \bar{p},\, \varphi)_{L^2(\Omega;\nu)}
    = -\|\varphi\|_{L^2(\Omega;\nu)}^2
      + (\tilde{p}_h - \bar{p},\, \varphi)_{L^2(\Omega;\nu)}.
\end{equation*}
Since the first term is non-positive, we obtain
\begin{equation}\label{estI:aux1}
    \mathrm{(I)} \leq (\tilde{p}_h - \bar{p},\, \varphi)_{L^2(\Omega;\nu)}.
\end{equation}
To bound the right-hand side, let $w := G(\mathcal{L}_h\varphi) \in H^2(\Omega)\cap H_0^1(\Omega)$
denote the solution of
\begin{equation}\label{aux_darcy_sol}
    -\nabla\cdot(K\nabla w) = \mathcal{L}_h\varphi.
\end{equation}
By the elliptic regularity,
\begin{equation}\label{aux_elliptic}
    \|w\|_{H^2(\Omega)} \leq C\|\mathcal{L}_h\varphi\|_{L^2(\Omega)}.
\end{equation}
Also, by \eqref{operator_R}, we get
\begin{equation*}
    a_h^\kappa(\mathfrak{R}_h w, v_h)=a_h^\kappa(w, v_h)=(\mathcal{L}_h\varphi, v_h)_{L^2(\Omega)}=a_h^\kappa(\varphi, v_h).
\end{equation*}
Thus
$\varphi = \mathfrak{R}_h w = w + \mathfrak{R}_h w-w$, and hence
by the triangle inequality, continuous embedding $H^2(\Omega) \hookrightarrow L^\infty(\Omega)$, \eqref{est:operator_R3}, \eqref{aux_darcy_sol}, and \eqref{aux_elliptic}, 
\begin{equation}\label{estI:aux2}
    \|\varphi\|_{L^2(\Omega;\nu)}
    \leq \|w\|_{L^2(\Omega;\nu)} + \|w - \mathfrak{R}_h w\|_{L^2(\Omega;\nu)}
    \leq C\|w\|_{H^2(\Omega)} + Ch^{\min\{\alpha,2\alpha-1\}}
    \|\mathcal{L}_h\varphi\|_{L^2(\Omega)}
    \leq C\|\mathcal{L}_h\varphi\|_{L^2(\Omega)}.
\end{equation}
Now, 
\begin{equation}\label{estI_2nd}
    \|\tilde{p}_h - \bar{p}\|_{L^2(\Omega;\nu)}
    \leq \|\tilde{p}_h - \mathfrak{R}_h\bar{p}\|_{L^2(\Omega;\nu)} + \|\mathfrak{R}_h\bar{p} - \bar{p}\|_{L^2(\Omega;\nu)}.
\end{equation}
The second term of the right hand side of \eqref{estI_2nd} is estimated by
\eqref{est:operator_R3} as
\begin{equation}\label{estI:aux3}
    \|\mathfrak{R}_h\bar{p} - \bar{p}\|_{L^2(\Omega;\nu)}\leq Ch^{\min\{\alpha,2\alpha-1\}}.
\end{equation}
For the first term of \eqref{estI_2nd}, 
note that $\tilde{p}_h - \mathfrak{R}_h\bar{p}$ is thus the \emph{discrete} state driven by the source $Q_h\bar{\gamma}-\bar{\gamma}$ and is, in general, \emph{not} equal to the continuous solve $G(Q_h\bar{\gamma}-\bar{\gamma})$. To bound it, let
\begin{equation*}
    \zeta := G(Q_h\bar{\gamma}-\bar{\gamma}) \in H^2(\Omega)\cap H^1_0(\Omega)
\end{equation*}
denote the continuous solution of $-\nabla\cdot(K\nabla\zeta)=Q_h\bar{\gamma}-\bar{\gamma}$. Since $\zeta\in H^2(\Omega)\cap H^1_0(\Omega)$, SIPG consistency together with \eqref{operator_R} gives
\begin{equation}\label{eqn:estI_aux_Ritz1}
    a_h^\kappa(\mathfrak{R}_h\zeta,v_h)=a_h^\kappa(\zeta,v_h)=(Q_h\bar{\gamma}-\bar{\gamma},v_h)_{L^2(\Omega)}
\end{equation}
for all $v_h\in V_h$. 
On the other hand, 
we can rewrite
\begin{equation*}
    \tilde{p}_h - \mathfrak{R}_h\bar{p}=S_h(Q_h\bar{\gamma}) - S_h\bar{\gamma}
    =S_h(Q_h\bar{\gamma} - \bar{\gamma})
\end{equation*}
because $a_h^\kappa(\mathfrak{R}_h\bar{p},v_h)=(f+\bar{\gamma},v_h)_{L^2(\Omega)}=a_h^\kappa(S_h\bar{\gamma},v_h)$ for any $v_h\in V_h$.
Then 
\begin{equation}\label{eqn:estI_aux_Ritz2}
    a_h^\kappa(\tilde{p}_h - \mathfrak{R}_h\bar{p},v_h)=(Q_h\bar{\gamma}-\bar{\gamma},v_h)_{L^2(\Omega)}.
\end{equation}
Using coercivity of $a_h^\kappa$, \eqref{eqn:estI_aux_Ritz1}, and \eqref{eqn:estI_aux_Ritz2}, it follows
$\tilde{p}_h - \mathfrak{R}_h\bar{p} = \mathfrak{R}_h\zeta$.
Hence, by the triangle inequality, the embedding $H^2(\Omega)\hookrightarrow L^\infty(\Omega)$ with the finiteness of $\nu$, the elliptic regularity $\|\zeta\|_{H^2(\Omega)}\le C\|Q_h\bar{\gamma}-\bar{\gamma}\|_{L^2(\Omega)}$, \eqref{est:operator_R3}, and the control approximation estimate \eqref{eqn:control_projection_estimate},
\begin{equation}\label{estI:aux4}
\begin{aligned}
    \|\tilde{p}_h - \mathfrak{R}_h\bar{p}\|_{L^2(\Omega;\nu)}
    &= \|\mathfrak{R}_h\zeta\|_{L^2(\Omega;\nu)}
    \leq \|\zeta\|_{L^2(\Omega;\nu)} + \|\zeta - \mathfrak{R}_h\zeta\|_{L^2(\Omega;\nu)} \\
    &\leq C\|\zeta\|_{H^2(\Omega)}
        + Ch^{\min\{\alpha,2\alpha-1\}}\|Q_h\bar{\gamma}-\bar{\gamma}\|_{L^2(\Omega)} \\
    &\leq C\|Q_h\bar{\gamma}-\bar{\gamma}\|_{L^2(\Omega)}
    \leq Ch^{1-\epsilon}.
\end{aligned}
\end{equation}
Combining \eqref{estI:aux1}, \eqref{estI:aux2}, \eqref{estI_2nd}, \eqref{estI:aux3}, and \eqref{estI:aux4}, we have
\begin{equation}\label{estI}
    \mathrm{(I)}\leq Ch^{\min\{\alpha,2\alpha-1,\,1-\epsilon\}}
\|\mathcal{L}_h\varphi\|_{L^2(\Omega)} .
\end{equation}

\noindent\textbf{Estimate of (II).}
By the definition of $\mathcal{L}_h$ \eqref{discrete_L}, the symmetry of $a_h^\kappa$,
and the equation \eqref{aux_zhat} with $v_h = \varphi$,
\begin{equation*}
    (\hat{z}_h,\, \mathcal{L}_h\varphi)_{L^2(\Omega)}
    = a_h^\kappa (\varphi,\, \hat{z}_h)
    = (\bar{p} - p_d,\, \varphi)_{L^2(\Omega;\nu)}.
\end{equation*}
Since $\mathcal{L}_h\varphi \in L^2(\Omega)$, the transposition identity~\eqref{eqn:transposition_identity} with $w$
as above gives
\begin{equation*}
    (\bar{z},\, \mathcal{L}_h\varphi)_{L^2(\Omega)}
    = (\bar{p} - p_d,\, w)_{L^2(\Omega;\nu)}.
\end{equation*}
Subtracting,
\begin{equation*}
    \mathrm{(II)}
    = (\bar{p} - p_d,\, \varphi - w)_{L^2(\Omega;\nu)}.
\end{equation*}
Recall that $\varphi -w= \mathfrak{R}_h w - w$.
Applying the Cauchy--Schwarz inequality, \eqref{est:operator_R3}, and \eqref{aux_darcy_sol},
\begin{equation}\label{estII}
    \mathrm{(II)}
    \leq \|\bar{p} - p_d\|_{L^2(\Omega;\nu)}\,\|w - \mathfrak{R}_h w\|_{L^2(\Omega;\nu)}
    \leq Ch^{\min\{\alpha,2\alpha-1\}}
    \|\mathcal{L}w\|_{L^2(\Omega)}
    = Ch^{\min\{\alpha,2\alpha-1\}}
    \|\mathcal{L}_h\varphi\|_{L^2(\Omega)}.
\end{equation}

Combining \eqref{estI} and \eqref{estII},
\begin{equation*}
    (\bar{z}_h - \bar{z},\; \tilde{\gamma}_h - \bar{\gamma}_h)_{L^2(\Omega)}
    \leq Ch^{\min\{\alpha,2\alpha-1,\,1-\epsilon\}}
    \|\mathcal{L}_h(\tilde{p}_h - \bar{p}_h)\|_{L^2(\Omega)},
\end{equation*}
which completes the proof.
\end{proof}

\begin{thm}\label{thm:error}
For every $\epsilon > 0$, there exists a constant $C_\beta > 0$, 
independent of $h$, such that
\begin{equation}
    \|\bar{\gamma} - \bar{\gamma}_h\|_{L^2(\Omega)} 
    \leq C_\beta\, h^{\min\{\theta,\, 1-\epsilon\}},
\end{equation}
where $\theta = \min\{\alpha, 2\alpha - 1\}$ with $\alpha \in (1/2, 1]$.
Moreover, there exists $C'_\beta > 0$, independent of $h$, such that
\begin{equation}
    \|\bar{p} - \bar{p}_h\|_{L^2(\Omega)} 
    \leq C'_\beta\, h^{\min\{\theta,\, 1-\epsilon\}}.
\end{equation}
\end{thm}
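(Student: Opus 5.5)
The plan is to split the control error as
\[
\bgm-\bgm_h=(\bgm-\widetilde\gamma_h)+(\widetilde\gamma_h-\bgm_h),\qquad \widetilde\gamma_h:=Q_h\bgm .
\]
The first piece is $O(h^{1-\epsilon})$ by \eqref{eqn:control_projection_estimate}, so the work is in bounding $\|\widetilde\gamma_h-\bgm_h\|_{L^2(\Omega)}$. Both $\widetilde\gamma_h$ and $\bgm_h$ lie in $W_h$, and $(\bgm-Q_h\bgm,w_h)_{L^2(\Omega)}=0$ for all $w_h\in W_h$. Hence
\[
\beta\|\widetilde\gamma_h-\bgm_h\|^2_{L^2(\Omega)}
=\beta(\bgm-\bgm_h,\widetilde\gamma_h-\bgm_h)
=(\lambda,\widetilde\gamma_h-\bgm_h)-(\beta\bgm_h+\bz_h,\widetilde\gamma_h-\bgm_h)+(\bz_h-\bz,\widetilde\gamma_h-\bgm_h),
\]
with $\lambda=\beta\bgm+\bz$ as in \eqref{lambda_L}. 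By \eqref{Q_h_positive}, $\widetilde\gamma_h\in U_{\gamma,h}$, so the discrete inequality \eqref{discrete_optimality_condition3} with $\eta_h=\widetilde\gamma_h$ makes the middle term nonpositive. The last term is bounded by Lemma~\ref{lem:adjoint_coupling}. As noted in its proof, $\mathcal L_h(\widetilde p_h-\bp_h)=\widetilde\gamma_h-\bgm_h$, so this term is at most $Ch^{\min\{\theta,1-\epsilon\}}\|\widetilde\gamma_h-\bgm_h\|_{L^2(\Omega)}$.

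The main obstacle is the first term $(\lambda,\widetilde\gamma_h-\bgm_h)$. The continuous inequality \eqref{stationary_optimality_control_ineq} cannot be tested with $\bgm_h$, because $\bgm_h$ satisfies only the projected bounds $Q_h\gamma_\pm$ and need not lie in $U_\gamma$. I would instead write
\[
(\lambda,\widetilde\gamma_h-\bgm_h)=(\lambda,Q_h\bgm-\bgm)+(\lambda,\bgm-\bgm_h).
\]
The first part equals $(\lambda-Q_h\lambda,Q_h\bgm-\bgm)$. By \eqref{eqn:lambda_regular} and \eqref{control_regul}, each factor is $O(h^{1-\epsilon})$, so this part is $O(h^{2-2\epsilon})$. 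For the second part, decompose $\lambda=\lambda^++\lambda^-$ and use the complementarity relations \eqref{L_complementary}:
\[
(\lambda,\bgm-\bgm_h)=(\lambda^+,\gamma_--\bgm_h)+(\lambda^-,\gamma_+-\bgm_h).
\]
Since $\bgm_h\in W_h$,
\[
(\lambda^+,\gamma_--\bgm_h)=(Q_h\lambda^+,Q_h\gamma_--\bgm_h)+(\lambda^+-Q_h\lambda^+,\gamma_--Q_h\gamma_-).
\]
The first term is $\le 0$, because $Q_h\lambda^+\ge0$ and $\bgm_h\ge Q_h\gamma_-$ elementwise. The second term is $O(h^{2-2\epsilon})$ by \eqref{eqn:lambda_pm_regular} and $\gamma_-\in H^1(\Omega)$. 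The $\lambda^-$ term is handled symmetrically, using $Q_h\lambda^-\le 0$ and $\bgm_h\le Q_h\gamma_+$.

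Collecting these bounds gives
\[
\beta\|\widetilde\gamma_h-\bgm_h\|^2_{L^2(\Omega)}\le Ch^{2-2\epsilon}+Ch^{\min\{\theta,1-\epsilon\}}\|\widetilde\gamma_h-\bgm_h\|_{L^2(\Omega)}.
\]
Young's inequality then yields $\|\widetilde\gamma_h-\bgm_h\|_{L^2(\Omega)}\le C_\beta h^{\min\{\theta,1-\epsilon\}}$. The triangle inequality with \eqref{eqn:control_projection_estimate} gives the control estimate.

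For the state, Lemma~\ref{lem:state_from_control} gives
\[
\|\bp-\bp_h\|_{L^2(\Omega)}\le Ch^{2\alpha}\|\cL\bp\|_{L^2(\Omega)}+C\|\bgm-\bgm_h\|_{L^2(\Omega)}.
\]
Here $\|\cL\bp\|_{L^2(\Omega)}=\|f+\bgm\|_{L^2(\Omega)}$ is bounded independently of $h$. Since $2\alpha\ge\theta$, this yields the asserted state estimate.
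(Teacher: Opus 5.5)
Your proposal is correct and follows essentially the paper's argument. It uses the same comparison control $\widetilde\gamma_h=Q_h\bgm\in U_{\gamma,h}$ in the discrete variational inequality, the same treatment of $(\lambda,\widetilde\gamma_h-\bgm_h)_{L^2(\Omega)}$ via complementarity, the discrete bounds and $Q_h$-orthogonality, Lemma~\ref{lem:adjoint_coupling} for the adjoint coupling, and Lemma~\ref{lem:state_from_control} for the state. The only difference is cosmetic: you use $\bgm-Q_h\bgm\perp W_h$ to obtain the exact identity $\beta\|\widetilde\gamma_h-\bgm_h\|^2_{L^2(\Omega)}=\beta(\bgm-\bgm_h,\widetilde\gamma_h-\bgm_h)_{L^2(\Omega)}$ and bound the discrete part first, whereas the paper works with $\|\bgm-\bgm_h\|^2_{L^2(\Omega)}$ directly and absorbs an extra Cauchy--Schwarz term, which yields the same rate.
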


\begin{proof}
We first prove the control estimate and then recover the state estimate from Lemma~\ref{lem:state_from_control}.

Recall
$\widetilde{\gamma}_h:=Q_h\bgm$ and $\widetilde p_h:=S_h\widetilde{\gamma}_h$.
Then
\begin{equation}\label{pf:thm:00}
    a_h^\kappa(\widetilde p_h,v_h)
    =
    \iO (f+\widetilde{\gamma}_h)v_h\,d\vx
    \qquad
    \forall\,v_h\in V_h .
\end{equation}

We start with 
\begin{equation}\label{pf:thm:01}
\beta\|\bgm-\bgm_h\|_{L^2(\Omega)}^2
=
\beta(\bgm-\bgm_h,\bgm-\widetilde{\gamma}_h)_{L^2(\Omega)}
+
\beta(\bgm-\bgm_h,\widetilde{\gamma}_h-\bgm_h)_{L^2(\Omega)} .
\end{equation}
Note that by \eqref{eqn:control_constraints} and the positivity of $Q_h$ in \eqref{Q_h_positive}, 
$Q_h\gamma_-|_T
\leq
\widetilde{\gamma}_h|_T
\leq
Q_h\gamma_+|_T$ for all $T\in\cT_h$,
and hence $\widetilde{\gamma}_h\in U_{\gamma,h}$.

For the second term in \eqref{pf:thm:01}, 
by 
\eqref{discrete_optimality_condition3} with $\eta_h=\widetilde{\gamma}_h$, we have
\[
(\beta\bgm_h+\bz_h,\widetilde{\gamma}_h-\bgm_h)_{L^2(\Omega)}
\geq 0.
\]
Therefore,
\begin{equation}\label{pf:thm:01b}
\begin{aligned}
\beta(\bgm-\bgm_h,\widetilde{\gamma}_h-\bgm_h)_{L^2(\Omega)}
&\leq
(\beta\bgm+\bz_h,\widetilde{\gamma}_h-\bgm_h)_{L^2(\Omega)} .
\end{aligned}
\end{equation}
Writing $\lambda=\beta\bgm+\bz$, we obtain
\begin{equation}\label{pf:thm:01c}
(\beta\bgm+\bz_h,\widetilde{\gamma}_h-\bgm_h)_{L^2(\Omega)}
=
(\lambda,\widetilde{\gamma}_h-\bgm_h)_{L^2(\Omega)}
+
(\bz_h-\bz,\widetilde{\gamma}_h-\bgm_h)_{L^2(\Omega)} .
\end{equation}

We first estimate the first term in \eqref{pf:thm:01c}. Using \eqref{L_decomposition}, \eqref{L_minmax}
\eqref{eqn:pointwise_complementarity}, and the discrete constraints
\[Q_h\gamma_-
\leq
\bgm_h
\leq
Q_h\gamma_+,\]
we obtain
\begin{align}\label{pf:thm:multiplier_decomp}
(\lambda,\widetilde{\gamma}_h-\bgm_h)_{L^2(\Omega)}
&\leq
(\lambda^+,Q_h\bgm-\bgm)_{L^2(\Omega)}
+
(\lambda^-,Q_h\bgm-\bgm)_{L^2(\Omega)}
\nonumber\\
&\quad
+
(\lambda^+,\gamma_- - Q_h\gamma_-)_{L^2(\Omega)}
+
(\lambda^-,\gamma_+ - Q_h\gamma_+)_{L^2(\Omega)} .
\end{align}
Since $Q_h$ is the elementwise $L^2$ projection and $Q_h\lambda^\pm\in W_h$, the orthogonality of $Q_h$ gives
\begin{align}\label{pf:thm:multiplier_orthogonality}
(\lambda,\widetilde{\gamma}_h-\bgm_h)_{L^2(\Omega)}
&\leq
(\lambda^+-Q_h\lambda^+,Q_h\bgm-\bgm)_{L^2(\Omega)}
+
(\lambda^--Q_h\lambda^-,Q_h\bgm-\bgm)_{L^2(\Omega)}
\nonumber\\
&\quad
+
(\lambda^+-Q_h\lambda^+,\gamma_- - Q_h\gamma_-)_{L^2(\Omega)}
+
(\lambda^--Q_h\lambda^-,\gamma_+ - Q_h\gamma_+)_{L^2(\Omega)} .
\end{align}
By \eqref{eqn:lambda_pm_regular}, \eqref{control_regul}, the regularity of $\gamma_\pm$, and the approximation property of the elementwise $L^2$ projection, for every $\epsilon>0$,
\begin{equation}\label{eqn:Qh_projection_near_first_order}
\|\xi-Q_h\xi\|_{L^2(\Omega)}
\leq
C h^{1-\epsilon}
\qquad
\text{for } \xi\in\{\bgm,\gamma_-,\gamma_+,\lambda^+,\lambda^-\}.
\end{equation}
Consequently, using Cauchy-Schwartz inequality,
\begin{equation}\label{pf:thm:03}
(\lambda,\widetilde{\gamma}_h-\bgm_h)_{L^2(\Omega)}
\leq
C h^{2-2\epsilon}.
\end{equation}

Next, by Lemma~\ref{lem:adjoint_coupling},
\begin{equation}\label{pf:thm:04}
\left|
(\bz_h-\bz,\widetilde{\gamma}_h-\bgm_h)_{L^2(\Omega)}
\right|
\leq
C h^{\min\{\theta,1-\epsilon\}}
\|\mathcal{L}_h(\widetilde p_h-\bp_h)\|_{L^2(\Omega)} .
\end{equation}
By the definition of $\mathcal{L}_h$ \eqref{discrete_L},  \eqref{discrete_optimality_condition1}, and \eqref{pf:thm:00},
\[
\mathcal{L}_h(\widetilde p_h-\bp_h)
=
\widetilde{\gamma}_h-\bgm_h
=
Q_h\bgm-\bgm_h .
\]
Thus, by \eqref{eqn:Qh_projection_near_first_order},
\begin{equation}\label{pf:thm:07}
\|\mathcal{L}_h(\widetilde p_h-\bp_h)\|_{L^2(\Omega)}
\leq
\|Q_h\bgm-\bgm\|_{L^2(\Omega)}
+
\|\bgm-\bgm_h\|_{L^2(\Omega)}
\leq
C h^{1-\epsilon}
+
\|\bgm-\bgm_h\|_{L^2(\Omega)} .
\end{equation}

The first term on the right-hand side of \eqref{pf:thm:01} is estimated by Cauchy--Schwarz and \eqref{eqn:Qh_projection_near_first_order}:
\begin{equation}\label{pf:thm:08}
\beta
(\bgm-\bgm_h,\bgm-\widetilde{\gamma}_h)_{L^2(\Omega)}
\leq
C_\beta h^{1-\epsilon}
\|\bgm-\bgm_h\|_{L^2(\Omega)} .
\end{equation}

Combining \eqref{pf:thm:01}--\eqref{pf:thm:08}, we obtain
\begin{equation}\label{pf:thm:09}
\beta\|\bgm-\bgm_h\|_{L^2(\Omega)}^2
\leq
C_\beta h^{1-\epsilon}
\|\bgm-\bgm_h\|_{L^2(\Omega)}
+
C h^{2-2\epsilon}
+
C h^{\min\{\theta,1-\epsilon\}}
\left(
h^{1-\epsilon}
+
\|\bgm-\bgm_h\|_{L^2(\Omega)}
\right).
\end{equation}
Using the arithmetic--geometric mean inequality, the terms involving
$\|\bgm-\bgm_h\|_{L^2(\Omega)}$ can be absorbed into the left-hand side. Hence
\begin{equation}\label{pf:thm:control_final}
\|\bgm-\bgm_h\|_{L^2(\Omega)}
\leq
C_\beta h^{\min\{\theta,1-\epsilon\}} .
\end{equation}

Finally, Lemma~\ref{lem:state_from_control}, \eqref{eqn:fixed_control_state_error}, and \eqref{pf:thm:control_final} give
\begin{equation*}
\|\bp-\bp_h\|_{L^2(\Omega)}
\leq
Ch^{2\alpha}\|\cL\bp\|_{L^2(\Omega)}
+
C\|\bgm-\bgm_h\|_{L^2(\Omega)}\leq
C'_\beta h^{\min\{\theta,1-\epsilon\}}. \qedhere
\end{equation*}
\end{proof}

\section{Numerical Experiments}
\label{sec:numerics}
We now present numerical experiments confirming the convergence rates of Theorem~\ref{thm:error},
demonstrating the local mass conservation established in Section~\ref{sec:local_mass_conservation}, and
illustrating the robustness of the coefficient-weighted SIPG discretization for heterogeneous
permeability fields. All computations were performed using the \texttt{deal.II} finite element
library~\cite{dealii}.

\subsection{Numerical Implementation}
Let $\{\phi^p_1,\dots,\phi^p_{n}\}$ be a finite set of piecewise linear basis functions for the state $p$,
$\{\phi^z_1,\dots,\phi^z_{n}\}$ for the adjoint state $z$,
and
$\{\psi_1,\dots,\psi_{m}\}$ be a finite set of piecewise constant basis functions for the control $\gamma$.
Then
\[ p(\vx) = \sum^{n}_i p_i\phi^p_i,\quad
z(\vx) = \sum^{n}_i z_i\phi^z_i,\quad\text{and}\quad
\gamma(\vx) = \sum^{m}_i \gamma_i\psi_i.\]
In terms of the unknown vectors 
$\overrightarrow{p} = (p_1,\dots,p_{n})^\top,\;
\overrightarrow{z} = (z_1,\dots,z_{n})^\top,\;
\overrightarrow{\gamma} = (\gamma_1,\dots,\gamma_{m})^\top$,
the associated optimality system reads
\begin{subequations}
\begin{alignat}{1}
&A_\vK\overrightarrow{p} - M_{p,\gamma}\overrightarrow{\gamma} = \overrightarrow{f}\\
&A_\vK\overrightarrow{z} - R\overrightarrow{p} = - \overrightarrow{p_d},\\
&(\beta M_\gamma\overrightarrow{\gamma} + M_{p,\gamma}^\top\overrightarrow{z})^\top(\overrightarrow{\eta} - \overrightarrow{\gamma})\geq 0,
\end{alignat}
\end{subequations}
for any $\overrightarrow{\gamma}_-\leq\overrightarrow{\eta}\leq\overrightarrow{\gamma}_+$.
Here
\[
A_\vK(i,j) = a_h^\kappa(\phi_j,\phi_i),\quad
M(i,j) = \iO \phi_j\phi_i\,d\vx,\quad
R(i,j) = \iO \phi_j\phi_i\,d\nu,
\]
and
\[
\overrightarrow{f}(i) = \iO f\phi_i\,d\vx,\quad
\overrightarrow{p_d}(i) = \iO p_d\phi_i\,d\nu.
\]

Since $\psi_i$ are piecewise constant, $M_\gamma$ is a diagonal matrix and so
\[
\overrightarrow{\mu}:=-\left( \overrightarrow{\gamma} + (\beta M_\gamma)^{-1}M_{p,\gamma}^\top\overrightarrow{z}\right).
\]
For the components of the optimal vector $\overrightarrow{\gamma}$, we obtain
\[\gamma_i=\left\{
\begin{array}{ccl}
     \gamma_-&\text{if}&\gamma_i+\mu_i<\gamma_-,  \\
     \mu_i&\text{if}&\gamma_i+\mu_i\in[\gamma_-,\gamma_+],  \\
     \gamma_+&\text{if}&\gamma_i+\mu_i>\gamma_+ .
\end{array}\right.
\]
Finally, this leads us to use the primal-dual active set algorithm \cite{hintermuller2002primal, troltzsch2010optimal}:

\begin{itemize}
    \item[\it{1.}] Choose initial guesses $\overrightarrow{\gamma_0}$ and $\overrightarrow{\mu_0}$.
    In the $k$th step, define the active sets and inactive set as 
    \begin{align*}
    A_k^+ &= \big\{i\in\{1,\dots,n_\gamma\}:\gamma_{k-1,i} + \mu_{k-1,i}>\gamma_+\big\},\\
    A_k^- &= \big\{i\in\{1,\dots,n_\gamma\}:\gamma_{k-1,i} + \mu_{k-1,i}<\gamma_-\big\},\\
    I_k &= \{1,\dots,n_\gamma\}\setminus (A_k^+ \cup A_k^-).
    \end{align*}
    Let $\chi^+_k$ and $\chi^-_k$ denote the characteristic matrices of $A^+_k$ and $A^-_k$, respectively, with the diagonal elements
    \[
    \chi_{k}^+(i,i)=\left\{\begin{array}{cl}
        1 & \text{if}\; i\in A^+_k \\
        0 & \text{otherwise},
    \end{array}\right.
    \qquad
    \chi_{k}^-(i,i)=\left\{\begin{array}{cl}
        1 & \text{if}\; i\in A^-_k \\
        0 & \text{otherwise}.
    \end{array}\right.
    \]

    \item[2.]  Set $E = (\beta M_\gamma)^{-1}(I-\chi^-_k-\chi^+_k)$. The diagonal elements $E(i,i)$ vanish if and only if $i\in A^-_k\cup A^+_k$.
    Then we solve the following system of linear equations for $\overrightarrow{p},\;\overrightarrow{z},\; \overrightarrow{\gamma}$:
    \[
    \left[\begin{array}{ccc}
       \vspace{5pt} A_\vK & 0  & -M_{p,\gamma}\\
       \vspace{5pt} -R & A_\vK & 0 \\
       \vspace{5pt} 0  & EM_{p,\gamma}^\top  & I
    \end{array}\right]
    \left[\begin{array}{c}
       \vspace{5pt} \overrightarrow{p}\\ 
       \vspace{5pt} \overrightarrow{z}\\ 
       \overrightarrow{\gamma}
    \end{array}\right]
    = \left[\begin{array}{c}
       \vspace{5pt} \overrightarrow{f}\\ \vspace{5pt} -\overrightarrow{p_d}\\ 
       \chi_k^-\overrightarrow{\gamma_-} + \chi_k^+\overrightarrow{\gamma_+}
    \end{array}\right].
    \]

    \item[\it{3.}] Put $\overrightarrow{\gamma_k} := \overrightarrow{\gamma}$ and $\overrightarrow{\mu_k} := -\left(\overrightarrow{\gamma_k} + (\beta M_\gamma)^{-1}(M_{p,\gamma}^\top\overrightarrow{z}_k)\right)$.
    Update the active sets and inactive set.
    Terminate the algorithm when $A^\pm_{k+1} = A^\pm_{k}$.
\end{itemize}

The error is computed by
\begin{equation}
    \text{error} = \|\bp_6 - \bp_k\|_{L_2(\Omega)} + \|\bgm_6 - \bgm_k\|_{L_2(\Omega)},
\end{equation}
where $\bp_6$ and $\bgm_6$ are the discrete solutions when $h=2^{-6}$ and $\bp_k$ and $\bgm_k$ are the discrete solutions when $h=2^{-k}$.

\subsection{Example 1. Multi-feature geometric tracking } 
To evaluate the performance and geometric tracking capability of the proposed method, we consider three distinct benchmark configurations over the computational domain $\Omega = (-1, 1)^2$, as illustrated in Figure~\ref{fig:ex1_setsup}.
In each case, specific target features, including internal subdomains, linear interfaces, and discrete point locations, are tracked within the global domain. 
The control regularization parameter $\beta = 1$, and the bounds for the control constraints $\gamma_+ = 30$, and $\gamma_-=-\infty$.

\begin{itemize}
    \item[(i)] {Isolated subdomain with discrete points:} 
    The subdomain $[-0.5, 0.5] \times [-0.75, -0.25]$ is situated in the lower half of the region. Inside this subdomain, two discrete points are tracked symmetrically at $(-0.25, -0.5)$ and $(0.25, -0.5)$ (Figure~\ref{fig:ex1_case i)}). The weights for tracking are $w_{\mathscr{E}}=10$, and $w_{\mathscr{P}_1}=w_{\mathscr{P}_2}=10^4$. Also, the desired state for the subdomain is $p_\mathscr{E} =6 $  and for points $p_\mathscr{P} = 4$.

    \item[(ii)] {Vertical line interface with offset points:} 
    A 1D vertical interface defined along $x = 0$ for $y \in [-0.5, 0.5]$ divides the central area. In addition, two discrete points are placed diagonally across the line at $(-0.5, 0.25)$ and $(0.5, -0.25)$ (Figure~\ref{fig:ex1_case ii)}). The weights for tracking are $w_{\mathscr{C}}=10^2$, and $w_{\mathscr{P}_1}=w_{\mathscr{P}_2}=10^3$. Also, the desired state for the line segment is $p_\mathscr{C} =6 $  and for points $p_\mathscr{P} = 4$.

    \item[(iii)] {Hybrid feature interaction:} 
    This setup combines all three geometric entities: a rectangular subdomain $[-0.5, 0.5] \times [-0.75, -0.125]$, a vertical segment along $x = 0$ for $y \in [-0.375, 0.25]$, and a single discrete point located at $(0.0, -0.5)$ (Figure~\ref{fig:ex1_case iii)}). The weights for tracking are $w_{\mathscr{E}}=1$, $w_{\mathscr{C}}=10^3$ and $w_{\mathscr{P}}=10^2$. Also, the desired state for the subdomain is $p_\mathscr{E} = 4$, for line segment $p_\mathscr{C} = 4$  and for point $p_\mathscr{P} = 4.5$.
\end{itemize}

\begin{figure}[h!]
     \centering
     \begin{subfigure}{0.2\textwidth}
         \centering
\begin{tikzpicture}[scale=2]
    \draw  (-1, -1) rectangle (1, 1);
    \draw[dashed, blue!70!black, fill=blue!20, fill opacity=0.7] (-0.5, -0.75) rectangle (0.5, -0.25);

    \filldraw[red!80!black] (-0.25, -0.5) circle (0.03) ;
    \filldraw[red!80!black] (0.25, -0.5) circle (0.03);
    
\end{tikzpicture}
\caption{Case i)}
         \label{fig:ex1_case i)}
     \end{subfigure} \hspace{50pt}
\begin{subfigure}{0.2\textwidth}
         \centering
\begin{tikzpicture}[scale=2]
    \draw (-1, -1) rectangle (1, 1);

    \draw[line width=1.pt, green!70!black] (0, -0.5) -- (0, 0.5);

    \filldraw[red!80!black] (-0.5, 0.25) circle (0.03);
    \filldraw[red!80!black] (0.5, -0.25) circle (0.03);
\end{tikzpicture}
\caption{Case ii)}
         \label{fig:ex1_case ii)}
     \end{subfigure} \hspace{50pt}
\begin{subfigure}{0.2\textwidth}
         \centering
\begin{tikzpicture}[scale=2]
    \draw (-1, -1) rectangle (1, 1);

    \draw[dashed, blue!70!black, fill=blue!15] (-0.5, -0.75) rectangle (0.5, -0.125);

    \draw[line width=1.pt, green!80!black] (0., -0.375) -- (0., 0.25);

    \filldraw[red!80!black] (-0., -0.5) circle (0.03);
    
\end{tikzpicture}
\caption{Case iii)}
         \label{fig:ex1_case iii)}
     \end{subfigure}
     \caption{Example 1. Three different cases.}
\label{fig:ex1_setsup}
\end{figure}

\begin{table}[!h]
\centering
\setlength{\tabcolsep}{3pt}
\begin{tabular}{c||c|c|c||c|c|c||c|c|c}
& \multicolumn{3}{c||}{Case i)}  
& \multicolumn{3}{c||}{Case ii)}  
& \multicolumn{3}{c}{Case iii)}   \\
$h$ & error & order & PDAS It 
& error & order & PDAS It 
& error & order& PDAS It \\
\hline
$2^{-2}$&1.18e+01& -   & 1 &1.17e+01& -  & 1 &9.95e+00& -  &1\\
$2^{-3}$&5.99e+00&0.97 & 3 &6.26e+00&0.90& 1 &5.31e+00&0.73&1\\
$2^{-4}$&2.67e+00&1.14 & 3 &2.77e+00&1.18& 3 &2.38e+00&1.19&2\\
$2^{-5}$&1.15e+00&1.22 & 3 &1.20e+00&1.20& 3 &1.04e+00&1.21&2
\end{tabular}
\caption{Example 1. Convergence results for each different cases. }
\label{tab:example1}
\end{table}

Table~\ref{tab:example1} reports the convergence history for the three cases.
Across all configurations, the observed convergence orders stabilize between $0.97$ and $1.22$ as the mesh is refined from $h = 2^{-2}$ to $h = 2^{-5}$, in agreement with the theoretical rate $h^{\min\{\theta,\,1-\epsilon\}}$ predicted by Theorem~\ref{thm:error}. 
The primal-dual active set algorithm converges in at most three iterations across all mesh levels and all three cases, indicating that the number of PDAS iterations does not grow with mesh refinement.

Figure~\ref{fig:ex1} illustrates the computed state and control for each case.
In Case~(i), since the point weights exceed the subdomain weight, the point objectives dominate, and the optimal pressure is pulled toward $4$ near tracking points. 
The subdomain pressure achieves a compromise value of approximately $4.7$--$4.8$, reflecting the tension between the two objectives; the control saturates at the upper bound $\gamma_+ = 30$ in the vicinity of the tracking region to enforce this balance.
In Case~(ii), the resulting pressure field is visibly asymmetric, reflecting the diagonal placement of the two offset points relative to the vertical interface. Correspondingly, the control displays two distinct lobes on either side of the interface, each concentrating injection to drive the local pressure toward the respective point target $p_{\mathscr{P}} = 4$ while partially satisfying the line target $p_{\mathscr{C}} = 6$.
In Case~(iii), the pressure is most tightly controlled along the vertical segment, and the control field exhibits two symmetric lobes flanking the line, with a smaller contribution at the point location.
The subdomain pressure, penalized only weakly, settles near the common target value without active saturation of the control constraint.

\begin{figure}[!h]
     \centering
     \begin{subfigure}[h]{0.35\textwidth}
         \centering
         \includegraphics[width=\textwidth]{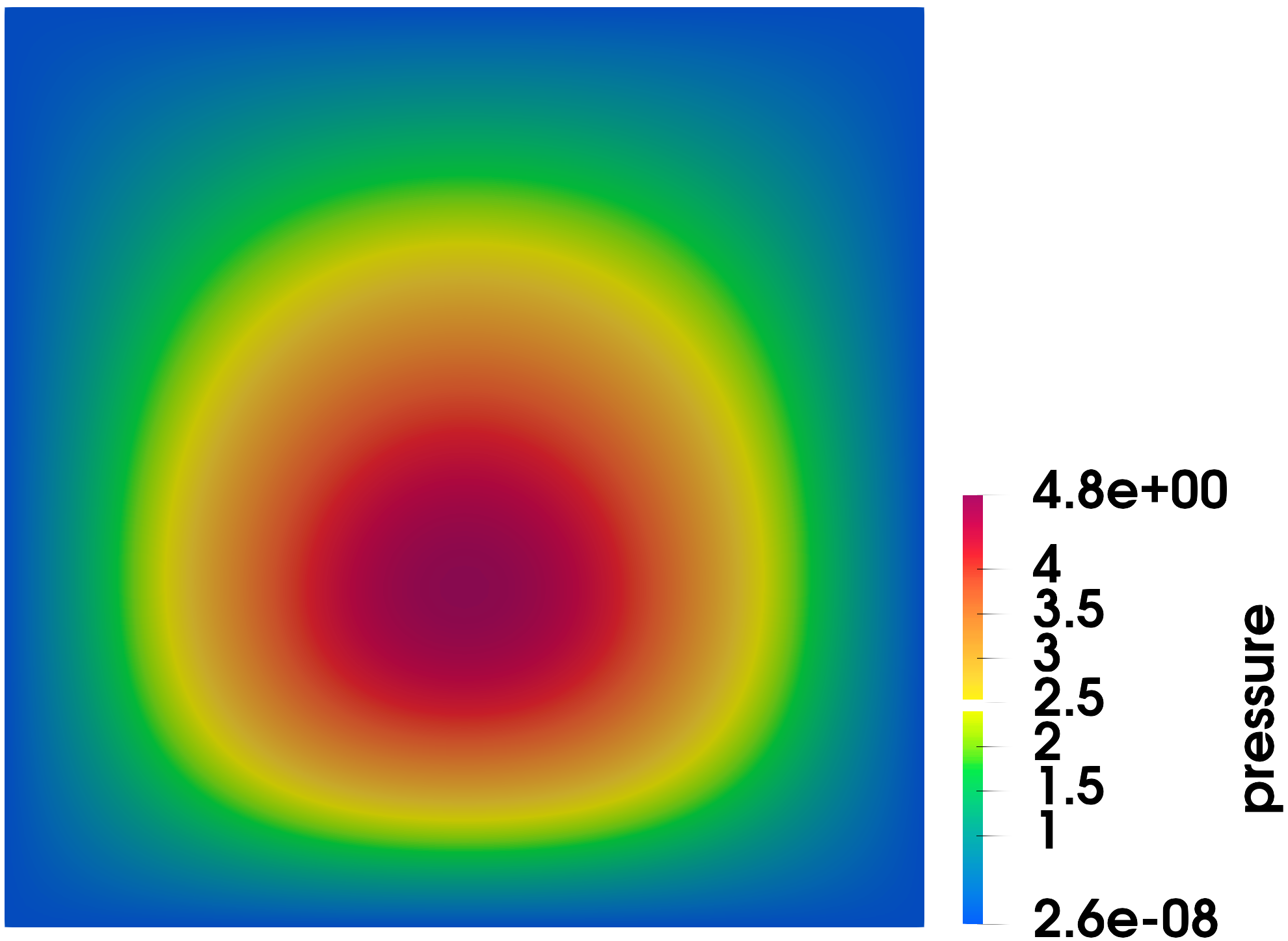}
         \caption{Case i)}
     \end{subfigure}
     \hspace{50pt}
     \begin{subfigure}[h]{0.35\textwidth}
         \centering
         \includegraphics[width=\textwidth]{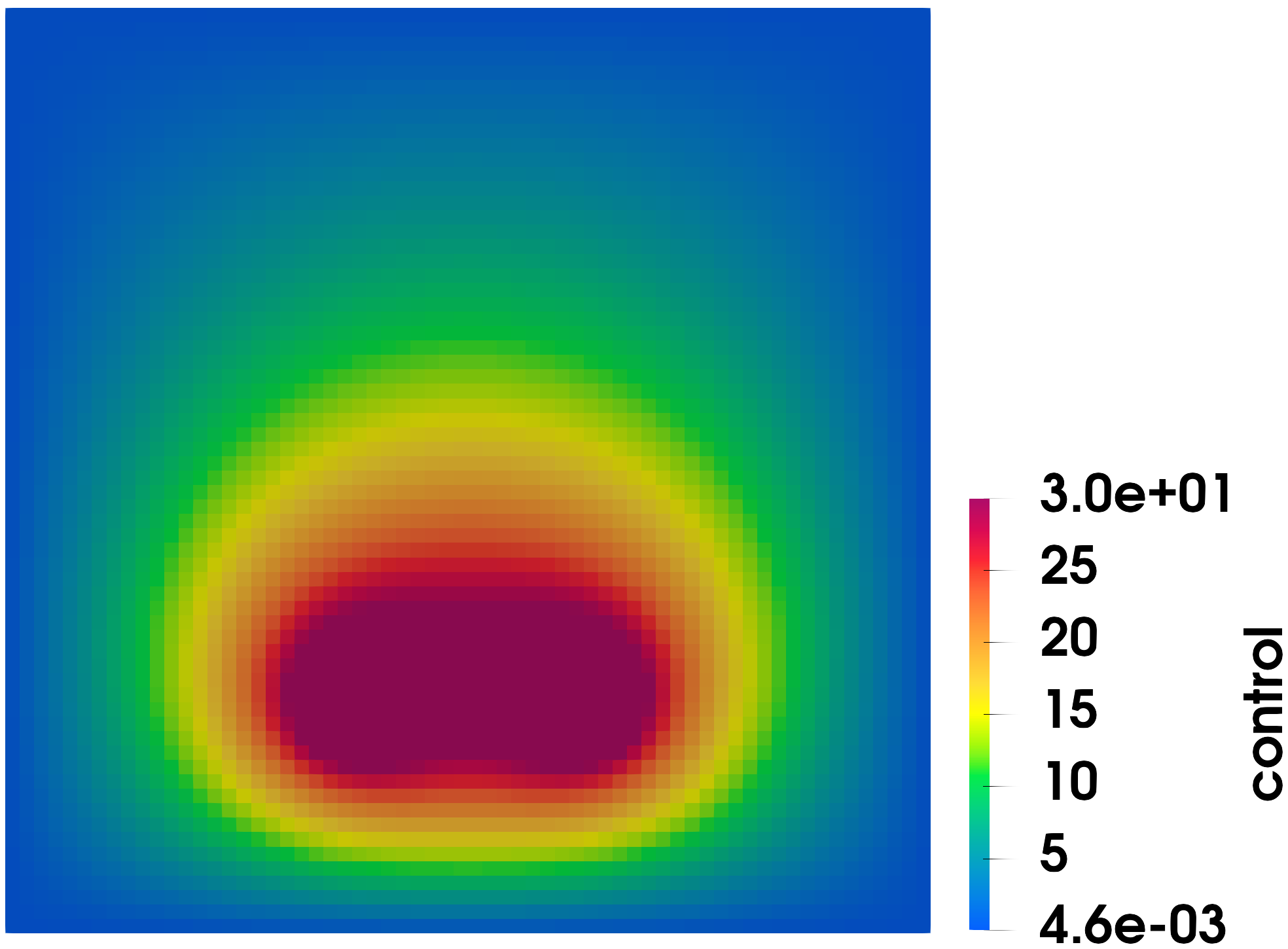}
         \caption{Case i)}
     \end{subfigure}
     \begin{subfigure}[h]{0.35\textwidth}
         \centering
         \includegraphics[width=\textwidth]{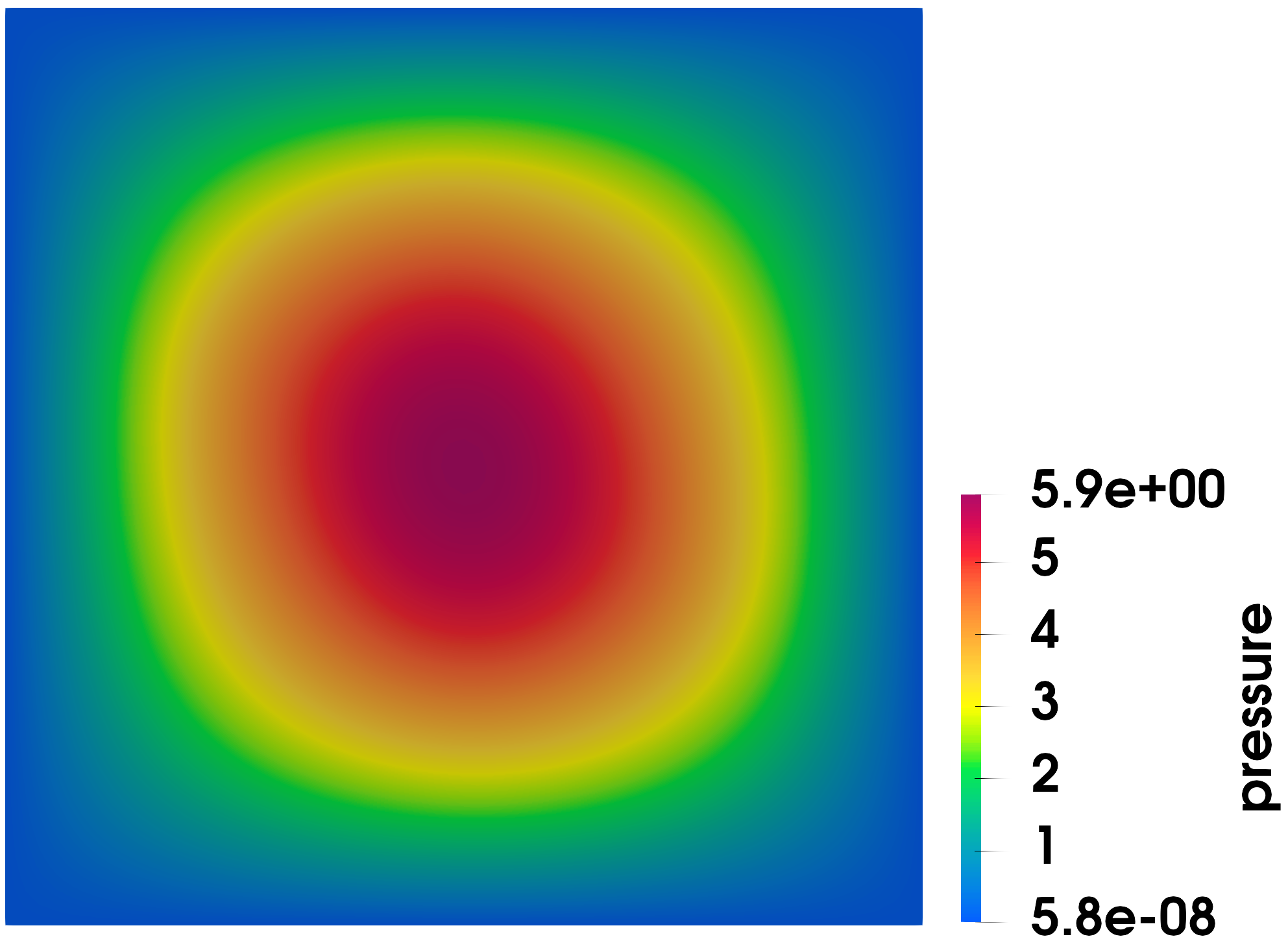}
         \caption{Case ii)}
     \end{subfigure}
     \hspace{50pt}
     \begin{subfigure}[h]{0.35\textwidth}
         \centering
         \includegraphics[width=\textwidth]{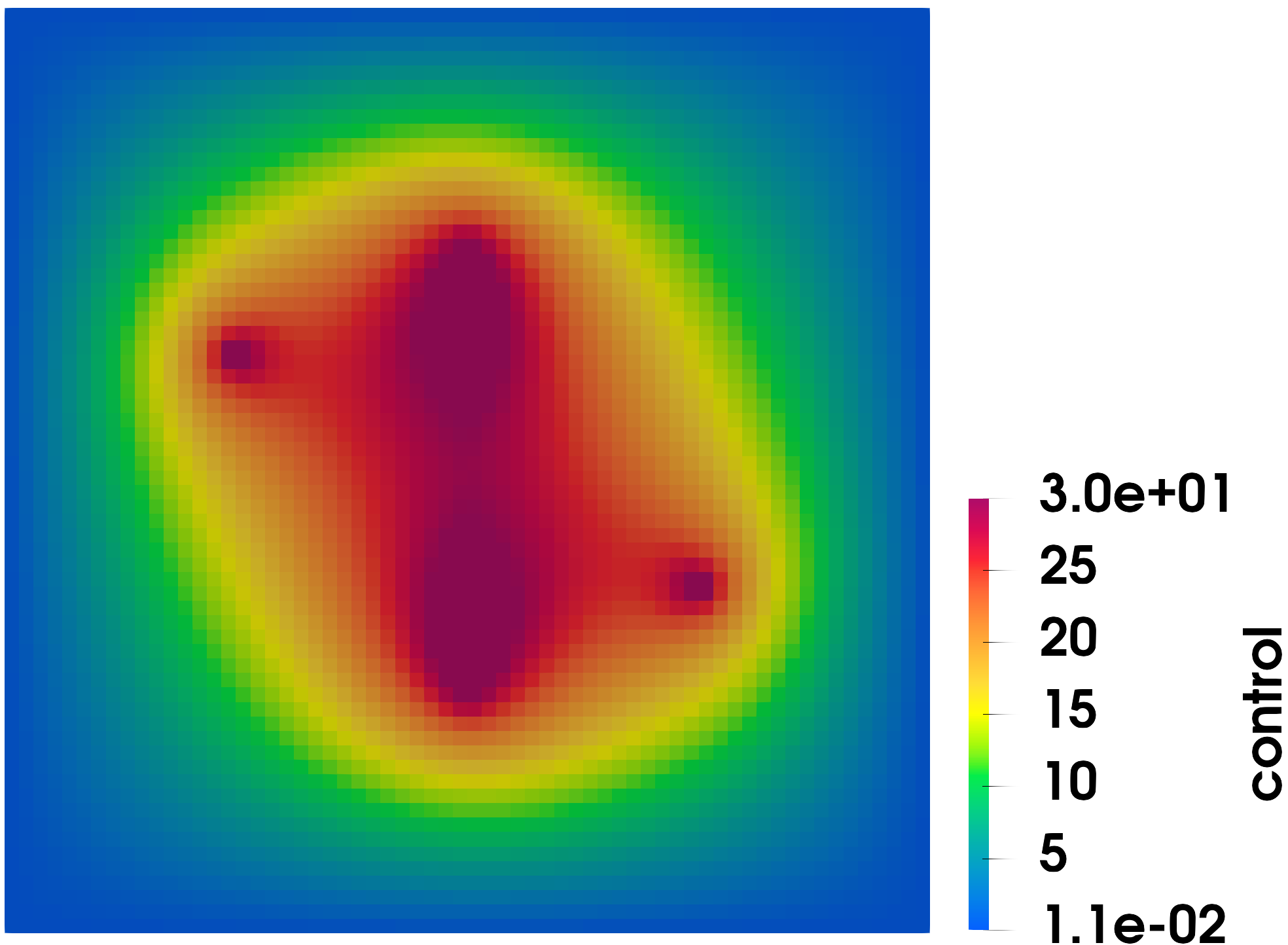}
         \caption{Case ii)}
     \end{subfigure}
     \begin{subfigure}[h]{0.35\textwidth}
         \centering
         \includegraphics[width=\textwidth]{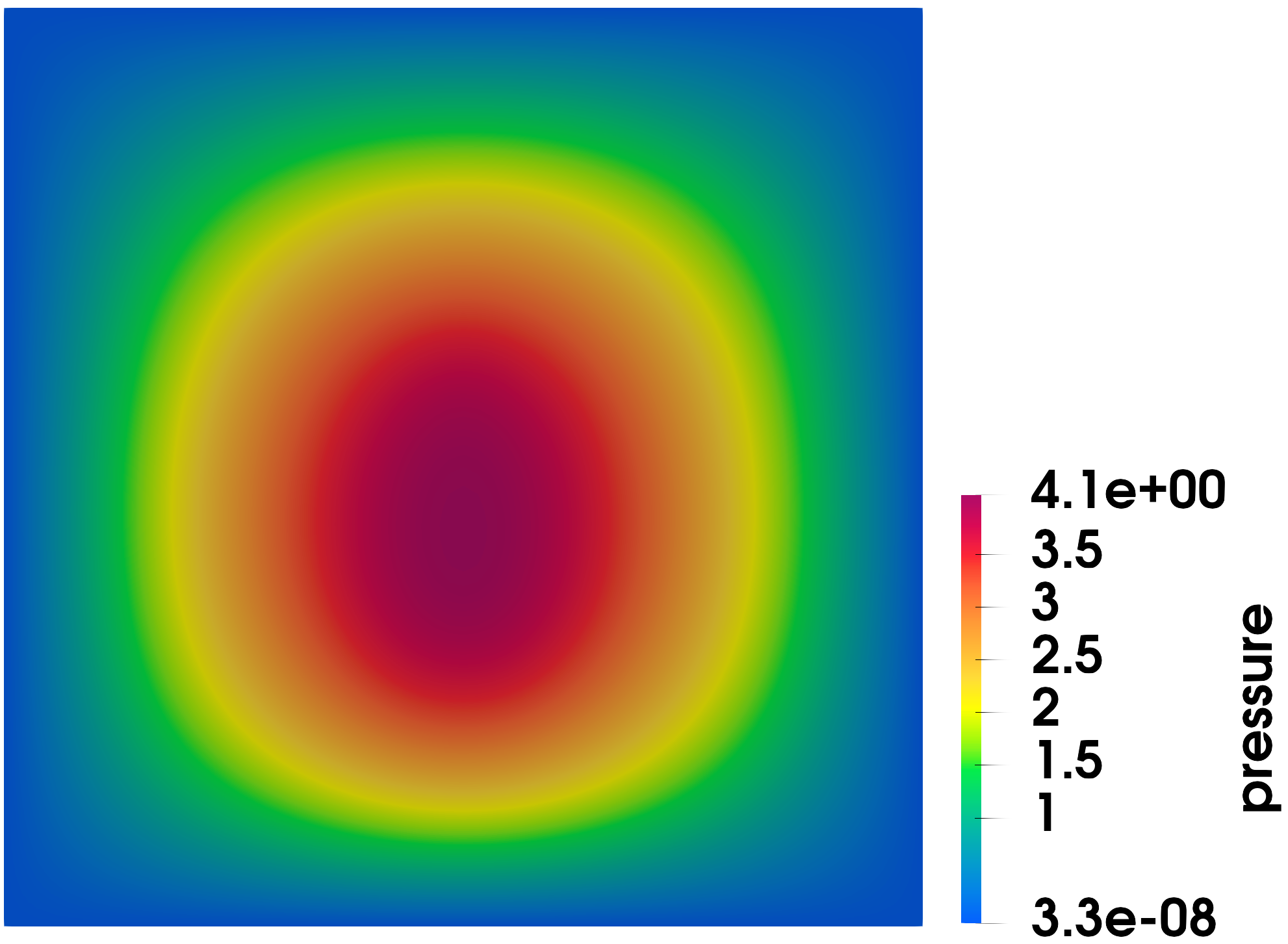}
         \caption{Case iii)}
     \end{subfigure}
     \hspace{50pt}
     \begin{subfigure}[h]{0.35\textwidth}
         \centering
         \includegraphics[width=\textwidth]{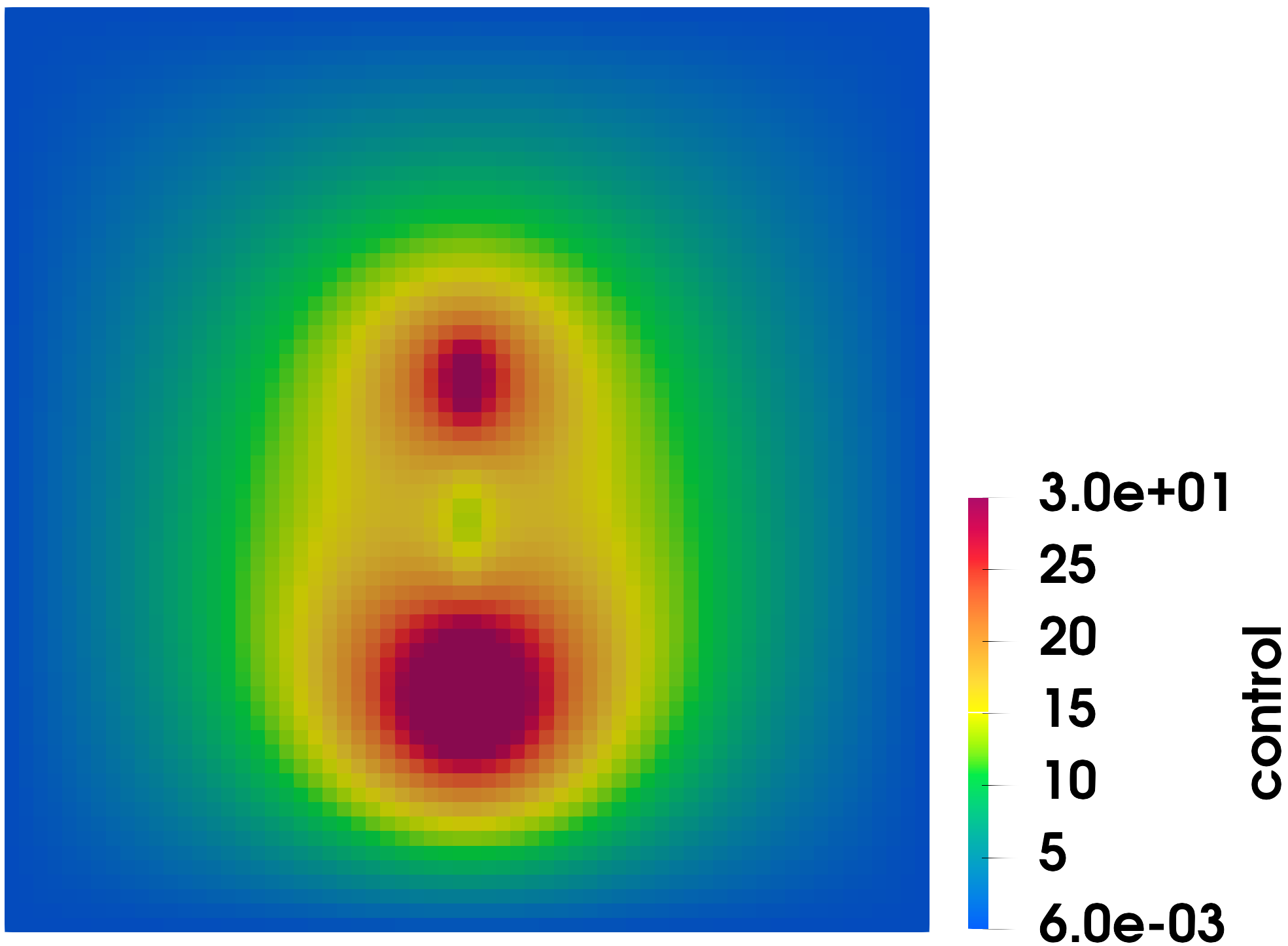}
         \caption{Case iii)}
     \end{subfigure}
        \caption{Example 1. Illustrates the state variable $\bar{p}_h$ and the control variable $\bar{\gamma}_h$ for each different cases. }
        \label{fig:ex1}
\end{figure}

\subsection*{Numerical Verification of Local Mass Conservation}

We verify the local mass conservation property \eqref{eqn:discrete_local_mass_conservation} through Case (iii) of Example 1 numerically.
For each element $T \in \mathcal{T}_h$, define the local residual
\begin{equation*}
  R_T \;:=\; \left|\,\sum_{e \in \partial T} \int_e \hat{q}_{h,T}\,ds
             \;-\; \int_T \bigl(f + \bar{\gamma}_h\bigr)\,d\mathbf{x}\,\right|,
\end{equation*}
where the outward numerical flux on $e \in \partial T$ is given by \eqref{flux}.

Since $f = 0$ and $\bar{\gamma}_h \in W_h$ is piecewise constant on each cell,
\begin{equation*}
  \int_T \bigl(f + \bar{\gamma}_h\bigr)\,d\mathbf{x}
  \;=\; \bar{\gamma}_h\big|_T \cdot |T|.
\end{equation*}

For an interior face $e$ shared by cells $T$ (cell~1) and $T'$ (cell~2),
with outward unit normal $\mathbf{n}$ of $T$, the numerical flux from \eqref{flux} reads
\begin{equation*}
  \int_e \hat{q}_{h,T}\,ds
  \;=\; \int_e \Bigl(
    -\tfrac{1}{2}\bigl(\mathbf{K}\nabla\bar{p}_h\big|_T
                      + \mathbf{K}\nabla\bar{p}_h\big|_{T'}\bigr)\cdot\mathbf{n}
    \;+\; \sigma\,\bigl(\bar{p}_h\big|_T - \bar{p}_h\big|_{T'}\bigr)
  \Bigr)\,ds,
\end{equation*}
where $\mathbf{K} = I$ and $\sigma$ is the SIPG penalty parameter.
For a boundary face $e \subset \partial\Omega$ with homogeneous Dirichlet
condition $\bar{p}_h = 0$,
\begin{equation*}
  \int_e \hat{q}_{h,T}\,ds
  \;=\; \int_e \Bigl(
    -\mathbf{K}\nabla\bar{p}_h\big|_T \cdot \mathbf{n}
    \;+\; \sigma\,\bar{p}_h\big|_T
  \Bigr)\,ds,.
\end{equation*}

\begin{table}[h]
\centering
\setlength{\tabcolsep}{4pt}
\begin{tabular}{c||c|c|c|c|c|c}
\hline
$h$             & $2^{-1}$ & $2^{-2}$ & $2^{-3}$ & $2^{-4}$ & $2^{-5}$ & $2^{-6}$ \\
\hline
$\max_T |R_T|$    & $1.3{\times}10^{-9}$ & $6.9{\times}10^{-11}$
                  & $3.7{\times}10^{-11}$ & $4.3{\times}10^{-13}$ & $2.4{\times}10^{-11}$
                  & $4.0{\times}10^{-14}$ \\
\hline
\end{tabular}
\caption{Maximum local conservation residual $\max_T |R_T|$ across refinement levels.}
\label{tab:lmc}
\end{table}

Table~\ref{tab:lmc} reports $\max_{T \in \mathcal{T}_h} R_T$ for each refinement level.
The residuals remain at machine-precision level on all meshes, confirming that the SIPG discretization satisfies~\eqref{eqn:discrete_local_mass_conservation} exactly, as guaranteed by the theoretical result in Section~\ref{sec:local_mass_conservation}.
\subsection{Example 2. Heterogeneous domain with layers} 
In this subsection, we compare four different cases, which can be found in \cite{jeong2025optimal}.
Let the computational domain $\Omega=(-1,1)^2$.
Consider the permeability tensor $\vK=K\vI$, where $K$ is a piecewise constant, and $\vI$ is the $2\times2$ identity matrix.
The cases we consider are as follows:\[
\text{(i)}\; \vK=5\vI \;\text{in} \;\Omega,\quad
\text{(ii)}\; \vK=\vI \;\text{in} \;\Omega,\quad
\text{(iii)}\; \vK =\left\{ \begin{array}{ll}
    \vI & \text{in}\; \Omega_1 \\
    10\vI & \text{in}\; \Omega_2
\end{array}\right.,\quad
\text{(iv)}\; \vK =\left\{ \begin{array}{ll}
    3\vI & \text{in}\; \Omega_1 \\
    \vI & \text{in}\; \Omega_2
\end{array}\right..
\]
Here $\Omega=\Omega_1\cup\Omega_2$, where
$\Omega_1=\{(x,y)|\;y\leq 0\}$ and $\Omega_2=\{(x,y)|\;y> 0\}$.

\begin{figure}[h!]
     \centering
     \begin{subfigure}[h]{0.2\textwidth}
         \centering
         \begin{tikzpicture}[scale=0.80]
         \draw (0,0) -- (4,0) -- (4,4) -- (0,4) -- (0,0);
         \node at (3.8,3.8) () {$\Omega$};
         \node at (2,2) () {$\vK=5\vI$};
         \draw[dashed, blue!70!black, fill=blue!20, fill opacity=0.7] (1,0.5) 
         rectangle (3,1.5);
         \end{tikzpicture}
         \caption{Case i)}
         \label{fig:ex2_case i)}
     \end{subfigure}
     \hfill
     \begin{subfigure}[h]{0.2\textwidth}
         \centering
           \begin{tikzpicture}[scale=0.80]
          \draw (0,0) -- (4,0) -- (4,4) -- (0,4) -- (0,0);
         \node at (3.8,3.8) () {$\Omega$};
         \node at (2,2) () {$\vK=\vI$};
        \draw[dashed, blue!70!black, fill=blue!20, fill opacity=0.7] (1,0.5) 
         rectangle (3,1.5);
         \end{tikzpicture}
         \caption{Case ii)}
         \label{fig:ex2_case ii)}
     \end{subfigure}
     \hfill
     \begin{subfigure}[h]{0.2\textwidth}
         \centering
           \begin{tikzpicture}[scale=0.80]
           \fill[gray!10!white] (0,0) rectangle (4,2);
         \draw (0,0) -- (4,0) -- (4,4) -- (0,4) -- (0,0);
         \draw (0,2) -- (4,2);
         \node at (3.8,3.8) () {$\Omega$};
         \node at (2,1.75) () {$\vK=\vI$};
         \node at (2,3.75) () {$\vK=10\vI$};
         \node at (0.3,3.8) () {$\Omega_2$};
         \node at (0.3,1.8) () {$\Omega_1$};
        \draw[dashed, blue!70!black, fill=blue!20, fill opacity=0.7] (1,0.5) 
         rectangle (3,1.5);
         \end{tikzpicture}
          \caption{Case iii)}
     \end{subfigure}
     \hfill
     \begin{subfigure}[h]{0.2\textwidth}
         \centering
            \begin{tikzpicture}[scale=0.80]
            \fill[gray!10!white] (0,2) rectangle (4,4);
         \draw (0,0) -- (4,0) -- (4,4) -- (0,4) -- (0,0);
         \draw (0,2) -- (4,2);
         \node at (3.8,3.8) () {$\Omega$};
         \node at (2,1.75) () {$\vK=3\vI$};
         \node at (2,3.75) () {$\vK=\vI$};
         \node at (0.3,3.8) () {$\Omega_2$};
         \node at (0.3,1.8) () {$\Omega_1$};
        \draw[dashed, blue!70!black, fill=blue!20, fill opacity=0.7] (1,0.5) 
         rectangle (3,1.5);
         \end{tikzpicture}
          \caption{Case iv)}
         \label{fig:ex2_case iv)}
     \end{subfigure}
\caption{Example 2. Four different cases.}
\label{fig:ex2_setsup}
\end{figure}

The tracking region is a rectangular subdomain which is bounded by $(-0.5,-0.75)\times(0.5,-0.25)$,.
The control regularization parameter $\beta=1$, the upper bound for the control constraints $\gamma_+=400$, and the weight for the tracking region $w_\mathscr{E}=10^4$, and the desired state for the tracking region $p_d=4.4$.

\begin{table}[h]
\centering
\setlength{\tabcolsep}{3pt}
\begin{tabular}[h!]{c||c|c||c|c||c|c||c|c}
& \multicolumn{2}{c||}{Case i)}  
& \multicolumn{2}{c||}{Case ii)}  
& \multicolumn{2}{c||}{Case iii)}  
& \multicolumn{2}{c}{Case iv)}   \\
$h$ & error & order 
& error & order 
& error & order
& error & order\\
\hline
$2^{-2}$&5.38e+01& -   &2.49e+01& -  &2.91e+01& -  &3.81e+01& - \\
$2^{-3}$&2.91e+01&0.89 &1.66e+01&0.59&1.75e+01&0.73&2.09e+01&0.87\\
$2^{-4}$&1.25e+01&1.22 &7.23e+00&1.20&7.69e+00&1.19&9.51e+00&1.14\\
$2^{-5}$&5.39e+00&1.21 &3.09e+00&1.23&3.32e+00&1.21&4.13e+00&1.20
\end{tabular}
\caption{Example 2. Convergence results for each different cases. }
\label{tab:example2}
\end{table}

Table~\ref{tab:example2} shows convergence results for the four permeability configurations. 
For the homogeneous cases~(i) and~(ii), the errors decay at rates approaching $1.21$--$1.23$ at the finest refinement levels, consistent with the theoretical prediction. 
The heterogeneous cases~(iii) and~(iv), in which the permeability jumps by a factor of $10$ or $3$ across the interface $\{y = 0\}$, yield convergence orders of $1.19$--$1.21$ at the finest levels, matching the homogeneous cases and confirming that the proposed DG discretization handles material discontinuities without degrading the convergence rate. 
In all four cases the tracking region $[-0.5,\,0.5]\times [-0.75,\,-0.25]$ is located entirely within $\Omega_1$, so the permeability contrast directly affects the flow path from the control to the tracking region; the fact that convergence rates are unaffected demonstrates that the SIPG penalty handles the discontinuous coefficient robustly.
These four configurations follow the setup of~\cite{jeong2025optimal}, and the convergence rates obtained here are consistent with those reported therein, providing additional validation of the monolithic method.

\section{Conclusions}
\label{sec:conclusions}

We have developed and analyzed a monolithic discontinuous Galerkin framework for optimal control of Darcy flow with Radon-measure tracking functionals and pointwise control constraints. 
The SIPG discretization couples the state, adjoint, and control variables in a single primal-dual active set iteration, which inherits the local mass conservation property of the underlying DG scheme.

The main analytical difficulty stems from the reduced regularity of the adjoint state, which arises because the tracking functional is supported on a lower-dimensional set rather than on a volume measure. 
We address this in two stages. 
First, we develop approximation theory for the DG Ritz projection measured against the tracking measure itself, exploiting elliptic regularity and a Sobolev embedding to transfer $L^2$-based estimates to the singular-measure norm. 
Second, and more subtly, we control the cross-coupling between the discrete adjoint error and the discrete control error, which cannot be treated directly without introducing a circular argument. 
The resolution is to introduce an intermediate discrete adjoint driven by the continuous optimal state, which decouples the two error quantities and allows each to be bounded independently. 
Theorem~\ref{thm:error} assembles these estimates into convergence rates that reflect the reduced adjoint regularity through the elliptic regularity index $\alpha$.

Numerical experiments on heterogeneous porous media confirm the predicted rates and demonstrate local mass conservation of the computed flow field. 
The numerical results also suggest that the state error converges at the sharper rate $\mathcal{O}(h^{2\alpha})$, improving upon the $\mathcal{O}(h^{\min\{\theta,1-\epsilon\}})$ bound established in Theorem~\ref{thm:error}, and closing this gap theoretically is left for future work.
Additional directions include extending the framework to time-dependent Darcy flow, incorporating more general permeability tensors, and developing adaptive mesh strategies that exploit the local structure of Radon-measure tracking functionals to recover improved convergence near the support of the measure.

\section*{Acknowledgements}
The author S. Lee was supported by the National Research Foundation of Korea (NRF) grant funded by the Korea government (MSIT) (RS-2026-25505849).
\bibliographystyle{plain}
\bibliography{biblio}

\end{document}